\setlist{itemsep=.125in}
\setlist[1]{topsep=.25in}
\setlist[1]{labelindent=\parindent}
\setlist[1]{leftmargin=\leftmargin - .125in}
\DeclareMathOperator{\Mat}{Mat} 
\DeclareMathOperator{\Tr}{Tr} 
\DeclareMathOperator{\frob}{Frob} 
\newcommand{\matid}{\ensuremath{I}} 
\newcommand{\F}{\ensuremath{\mathbb{F}}} 
\newcommand{\C}{\ensuremath{\mathbb{C}}} 
\numberwithin{equation}{section}
\theoremstyle{plain}
\newtheorem{theorem}{Theorem}[section]
\newtheorem{lemma}[theorem]{Lemma}
\newtheorem{corollary}[theorem]{Corollary}
\newtheorem{proposition}[theorem]{Proposition}
\theoremstyle{definition}
\newtheorem{definition}[theorem]{Definition}
\newtheorem{example}[theorem]{Example}
\theoremstyle{remark}
\newtheorem{remark}[theorem]{Remark}
\title[\parbox{14cm}{\centering{Salem sets in modules over finite rings \hspace{1in}}} \quad]{The square root law and structure of finite rings}
\author{A. Iosevich}
\address{A. Iosevich\\ Department of Mathematics\\ University of Rochester\\ Rochester, NY\ 14627 }
\email{iosevich@math.rochester.edu}
\author{B. Murphy}
\address{B. Murphy \\ Department of Mathematics\\ University of Rochester \\ Rochester, NY\ 14627 }
\email{murphy@math.rochester.edu}
\author{J. Pakianathan}
\address{J. Pakianathan \\ Department of Mathematics\\ University of Rochester \\ Rochester, NY\ 14627 }
\email{}
\begin{document}
\maketitle

\begin{abstract} Let $R$ be a finite ring and define the hyperbola $H=\{(x,y) \in R \times R: xy=1 \}$.
Suppose that for a sequence of finite odd order rings of size tending to infinity, the following ``square root law'' bound holds with a constant $C>0$ for all non-trivial characters $\chi$ on $R^2$:
\[
\left| \sum_{(x,y)\in H}\chi(x,y)\right|\leq C\sqrt{|H|}.
\]
Then, with a finite number of exceptions, those rings are fields.

For rings of even order we show that there are other infinite families given by Boolean rings and Boolean twists which satisfy this square-root law behavior. We classify the extremal rings, those for which the left hand side of the expression above satisfies the worst possible estimate. We also describe applications of our results to problems in graph theory and geometric combinatorics. 

These results provide a quantitative connection between the square root law in number theory, Salem sets, Kloosterman sums, geometric combinatorics, and the arithmetic structure of the underlying rings.  
\end{abstract}         

\section{Introduction} 

\vskip.125in 

The square root law is a ubiquitous concept in modern mathematics. Roughly speaking, it says that 
$$ \left|\sum \ oscillating \ terms \ of \ modulus \ 1 \right| \leq C \sqrt{\# \ terms}.$$ 

\vskip.125in 

Many classical problems and open conjectures can be related to the square root law. %
For example, the Riemann hypothesis can be restated \cite{titchmarsh1986theory} in terms of the square root law applied to the exponential sum 
$$
\sum_{0<p \leq q \leq H; (p,q)=1} e^{2 \pi i \frac{p}{q}}.
$$ 

\vskip.125in 



Another example is provided by the Hardy Circle Conjecture, which says that the number of lattice points inside the disk of radius $R$ in the plane is equal to $\pi R^2$ plus an error term of size at most $C_{\epsilon}R^{\frac{1}{2}+\epsilon}$ for any $\epsilon>0$. Expressing the number of lattice points as an exponential sum once again puts this problem into the framework of the square root law.  

Even if we were to only stick to famous problems, the list of situations where the square root law comes into play is very large.
An interested reader can take a look at a very informative survey by Barry Mazur \cite{mazur2008finding} where several aspects of this concept are exposed.
The manifestation of the square root law that is most relevant to us is Deligne's proof \cite{deligne1974conjecture, deligne1980conjecture} of the Riemann hypothesis for finite fields.
See also \cite{kowalski2010aspects} for a very nice survey of the problem.
One of the key aspects of this theory is obtaining sharp bounds for Kloosterman type sums \cite{weil1948exponential}, in particular the bound 
$$
\left| \sum_{s \in {\Bbb F}_q^{*}} \chi(as+bs^{-1}) \right| \leq 2 \sqrt{q},
$$
where ${\Bbb F}_q$ is the finite field with $q$ elements, $(a,b) \in {\Bbb F}_q^2 \backslash \{(0,0)\}$, ${\Bbb F}_q^{*}$ is the field's multiplicative group and $\chi$ is a non-trivial additive character on ${\Bbb F}_q$. 

The square root law is discussed in this paper in the context of \emph{Salem sets}. 




\begin{definition} \label{salemdefinition} Let ${\{R_i\}}_{i=1}^{\infty}$ denote a set of finite rings such that $|R_i| \to \infty$ as $i \to \infty$\footnote{Here and throughout, if $S$ is a finite set, $|S|$ denotes the number of elements of $S$.}.
Let ${\{S_i \}}_{i=1}^{\infty}$ denote the collection of sets such that $S_i \subset R_i^d$, the $d$-dimensional module over $R_i$. Let $\widehat{S_i}(\gamma_i)$ denote the Fourier transform of $S_i$, viewed as the characteristic function of the set $S_i$\footnote{See section \ref{section: proof} for the definition of the Fourier transform.}.

\begin{enumerate}[(i)]
\item Suppose that for every $\epsilon>0$ there exists $C_{\epsilon}>0$, independent of $i$, such that 
\[
|\widehat{S}_i(\gamma_i)| \leq C_{\epsilon} {|R_i|}^{-d} {|S_i|}^{\frac{1}{2}+\epsilon}
\]
for every non-zero $\gamma_i \in \Gamma_i$.
Then we say that ${\{S_i\}}_{i=1}^{\infty}$ is \emph{Salem}\/ with respect to ${\{R_i^d \}}_{i=1}^{\infty}$. 
\item Suppose that there exists $C>0$ such that 
\[
|\widehat{S}_i(\gamma_i)| \leq C {|R_i|}^{-d} {|S_i|}^{\frac{1}{2}}
\]
for every non-zero $\gamma_i \in \Gamma_i$. 
\end{enumerate}
Then we say that ${\{S_i\}}_{i=1}^{\infty}$ is \emph{purely $C$-Salem}\/ with respect to ${\{R_i^d \}}_{i=1}^{\infty}$. 
\end{definition} 

The sphere provides us with a way to construct examples of Salem sets in a discrete setting. Let ${\Bbb F}_q$ denote the finite field with $q$ elements. Let 
$$ S_t=\left\{x \in {\Bbb F}_q^d: ||x|| \equiv x_1^2+\dots+x_d^2=t \right\},$$ where $t$ is a unit in ${\Bbb F}_q$. It is well-known (see for example \cite{iosevich2007erdos}) that 
\[
|\widehat{S}_t(\gamma)| \leq 2q^{-d} q^{\frac{d-1}{2}}
\]
if $\gamma $ is non-zero.
Since $|S_t|=q^{d-1}+$ lower order terms, we instantly recover the Salem property with $\epsilon=0$ for any sequence of fields. 
The proof in \cite{iosevich2007erdos} also shows that the hyperboloid is a pure Salem set for any sequence of fields, and a Gauss sum estimate shows the same for the paraboloid.

We have already seen that some explicitly defined sets such as the sphere, paraboloid and the hyperboloid are all Salem sets over finite fields ${\Bbb F}_q$.%
Does this phenomenon persist over more general rings?
The main thrust of this paper is that the answer is, in general, no, at least for odd order rings.
In the case of even rings, the main culprits are large Boolean rings and we classify the set of exceptions below. 

\vskip.125in 

\begin{theorem} \label{hyperbola} Let ${\{R_i \}}_{i=1}^{\infty}$ denote a sequence of odd order finite rings with $|R_i| \to \infty$. Let 
$$H_i=\{x \in R_i^2: x_1x_2=1 \}$$ denote the hyperbola with respect to the ring $R_i$. 

\vskip.125in 

Suppose that ${ \{H_i \}}_{i=1}^{\infty}$ is purely Salem with respect to ${ \{R^2_i \}}_{i=1}^{\infty}$. Then there exists $i_0$ such that for all $i>i_0$ $R_i$ is a field. \end{theorem}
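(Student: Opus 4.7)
The plan is to argue by contrapositive: assuming the uniform Kloosterman-type bound $\left|\sum_{(x,y) \in H_i}\chi(x,y)\right| \leq C\sqrt{|H_i|}$ for every non-trivial character on $R_i^2$ and every $i$, I want to show that if infinitely many $R_i$ fail to be fields then I can exhibit a character witnessing a violation for large $i$. Write every character on $R_i^2$ as $\chi(x,y) = \chi_1(x)\chi_2(y)$ with $\chi_j \in \widehat{R_i}$. Since $H_i \leftrightarrow R_i^*$ via $(x,y)\mapsto x$, the hypothesis reads
\[
\Big|\sum_{x\in R_i^*}\chi_1(x)\chi_2(x^{-1})\Big|\leq C\sqrt{|R_i^*|},
\]
a uniform Kloosterman estimate. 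Two obstructions to $R_i$ being a field must be ruled out: (a) $R_i$ has several maximal ideals and so decomposes nontrivially as $R_i = A\times B$, and (b) $R_i$ is local with nontrivial maximal ideal.

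\emph{Step 1 (rule out (a)).} Suppose $R_i = A\times B$ with $|A|,|B|\geq 2$. Then $H_i = H_A\times H_B$. For $\chi_A$ a nontrivial character on $A^2$, pairing it with the trivial character on $B^2$ yields $\sum_{H_i}\chi = |B^*|\sum_{H_A}\chi_A$. A Plancherel estimate on $\widehat{A^2}$ applied to $\mathbf{1}_{H_A}$ (subtracting the trivial-character contribution) gives some nontrivial $\chi_A$ with $|\sum_{H_A}\chi_A|^2\geq |A^*|/2$, so $|B^*|\leq 2C^2$; symmetrically $|A^*|\leq 2C^2$. For odd order, every local summand $L$ has $|L^*|\geq (2/3)|L|$ (because $|k|\geq 3$), so taking $A$ equal to a single local summand of $R_i$ bounds each local summand's size by $3C^2$; taking $A,B$ to be balanced halves of the local summands gives $|A^*|\geq 2^{m/2}$, where $m$ is the total number of summands, forcing $m$ bounded. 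Together, $|L|\leq 3C^2$ and $m\leq m_0$ bound $|R_i|$, contradicting $|R_i|\to\infty$ unless $R_i$ is local for large $i$.

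\emph{Step 2 (rule out (b)).} Let $R_i$ be local with residue field $k_i$ of order $q_i\geq 3$, maximal ideal $\mathfrak{m}_i$, and $|R_i|=q_i^{n_i}$. The key step is to consider only characters $\chi_1,\chi_2$ of $R_i$ that are trivial on $\mathfrak{m}_i$, i.e., lifted from characters of $k_i = R_i/\mathfrak{m}_i$. For such characters, $\chi_j(x)$ depends only on $\bar x\in k_i^*$, and each fibre of $R_i^*\twoheadrightarrow k_i^*$ has size $|\mathfrak{m}_i|$, so
\[
\sum_{x\in R_i^*}\chi_1(x)\chi_2(x^{-1}) = |\mathfrak{m}_i|\,K_{k_i}(\bar\chi_1,\bar\chi_2),
\]
a classical Kloosterman sum over the finite field $k_i$. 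Plancherel on $\widehat{k_i^2}$ produces a nontrivial pair with $|K_{k_i}|\geq c\sqrt{q_i}$ for an absolute $c>0$, so the Salem hypothesis becomes $c\, q_i^{n_i-1}\sqrt{q_i}\leq C\sqrt{|R_i^*|}$, which simplifies to $q_i^{(n_i-1)/2}\leq C/c$. Since $|R_i|\to\infty$ and $q_i\geq 3$, this is violated whenever $n_i\geq 2$, forcing $n_i=1$ eventually, i.e., $R_i$ a field.

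\emph{Main obstacle.} The technically delicate part is the uniformity in Step 1: one needs both the individual-summand size bound and the number-of-summands bound with constants depending only on $C$ and not on $i$, and a single splitting supplies only one of them. The odd-order hypothesis enters in exactly one spot, to ensure $|k_i|\geq 3$ so that $|R_i^*|$ is a positive proportion of $|R_i|$; if a residue field has size $2$ the Plancherel lower bound in Step 1 degenerates, and Boolean rings $\mathbb{F}_2^n$ (where $|R^*|=1=\sqrt{|H|}$) are precisely the even-order exceptions flagged in the abstract.
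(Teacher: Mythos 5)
Your argument handles commutative rings correctly, but it does not address non-commutative finite rings, and Theorem~\ref{hyperbola} allows those (the paper explicitly sets up $R$ as ``associative with identity but not necessarily commutative''). The dichotomy you rely on --- either $R_i$ decomposes non-trivially as $A\times B$, or $R_i$ is local --- is a fact about commutative Artinian rings, and it fails in general. The matrix ring $\Mat_2(\mathbb{F}_q)$ with $q$ odd is a finite odd-order ring that is simple (hence admits no non-trivial product decomposition) and is not local (its Jacobson radical is zero, but the quotient is not a field). There are infinitely many such rings with $|R| = q^4\to\infty$, none of them fields, and your two steps never see them: Step 1 needs a ring product and Step 2 needs characters lifted from a proper quotient, but for a simple ring there is neither. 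The same issue arises for non-semisimple non-commutative rings such as upper-triangular $2\times 2$ matrices over $\mathbb{F}_q$, which are neither local nor products.

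The paper's route through Wedderburn structure theory exists precisely to handle these cases. After passing to $R/J$ one gets a product of matrix rings $\Mat_{n_j}(F_j)$, and matrix rings need two additional ingredients that your outline does not contain. First, for $\Mat_n(F)$ with $n\geq 3$ the paper takes $E=R\times M$, where $M$ is the left ideal of matrices with zero last column; $n(E)=0$, and the resulting bound $|F|^{n(n-2)/2}\lesssim C$ caps $n$ and $|F|$. Second, that bound is vacuous at $n=2$, so the paper computes a specific Kloosterman coefficient of $\Mat_2(F)$ explicitly and shows it forces $C_R\gtrsim|F|$, ruling out large $\Mat_2(F)$. Neither of these comes out of a Plancherel average or a character-lifting argument. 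Your Steps 1 and 2, restricted to commutative rings, are essentially the paper's product formula, its Plancherel lower bound $C\geq 1/\sqrt 2$, and its pullback lemma $C_R\geq C_{R/J}|J|^{1/2}$ applied to a local ring; they are correct in that regime, but to prove the stated theorem you must add the ideal-size argument for large matrix rings and the explicit $2\times 2$ computation.
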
 

\vskip.125in 

Our proof examines rings of general (odd or even) order also and our results show that if one restricts to finite rings which have no $\mathbb{Z}/2\mathbb{Z}$-factors in their semisimple decomposition, or even a bounded number of such factors, then Theorem~\ref{hyperbola} still holds in the sense that if the hyperbola is purely Salem with respect to such a sequence rings, then all but finitely many of these rings are fields. 

In the presence of an unlimited number of $\mathbb{Z}/2\mathbb{Z}$ factors, we establish other sequences which have a purely Salem hyperbola. Specifically, we show that if 
$$R_n=\mathbb{Z}/2\mathbb{Z} \times \dots \times \mathbb{Z}/2\mathbb{Z}$$ is the Boolean ring of order $2^n$ then the sequence of these rings has a purely Salem hyperbola. More generally if $R$ is any fixed finite ring, the sequence $S_n=R \times R_n$ has a purely Salem hyperbola.

\vskip.125in 

Theorem 5.14 on page 14 of \cite{babai1989fourier} shows that a random construction yields Salem sets with respect to any sequence of rings $R_i$ of size tending to infinity.
However such constructions result in a logarithmic loss, which means that the resulting sequence is Salem, but not purely Salem (see Definition \ref{salemdefinition} above).
Salem sets share many properties with random sets, so it is interesting when a specific set of geometric and arithmetic importance like the hyperbola, sphere, or parabola exhibits Salem set behavior.

The Fourier coefficients of the hyperbola can be interpreted as generalized Kloosterman sums in the resulting rings as explained in Section~\ref{section: proof}. These sums are an important class of exponential sums with numerous number theoretic applications. In the process of proving Theorem~\ref{hyperbola} we introduce the concept of  the \emph{Kloosterman-Salem number}\/ of a finite ring $R$, denoted by $C_R$, which measures quantitatively how well the ``square-root law'' holds for Kloosterman sums over that ring.
More precisely, the Kloosterman-Salem number is the smallest positive number $C$ such that
$$
\left| \sum_{ x \in R^*} \chi_m(x)\chi_n(x^{-1})\right| \leq C\sqrt{|R^*|}
$$
for all $(m,n) \in R^2\setminus\{(0,0)\}$.

The larger this number, the weaker the form of the resulting square-root law. In the course of proving Theorem~\ref{hyperbola}, we show that for any threshold $\alpha \in (0, \infty)$, only a finite number of non-field odd order finite rings have Kloosterman-Salem number $C_R < \alpha$. On the other hand, results of Weil, Deligne, and Nicholas Katz show finite fields $F$ have Kloosterman-Salem number asymptotic to $2$ as $|F| \to \infty$. For details please refer to Section~\ref{section: proof} below. 

\vskip.125in 

The following theorem, interesting in its own right, summarizes our quantitative results on the Kloosterman-Salem numbers. 

\begin{theorem}
\label{thm: mainhyperbola}
Let $\alpha \in (0, \infty)$ be a threshold. Then
\smallskip

\begin{itemize} 
\item If $\alpha < 2$, only a finite number of odd order finite rings have Kloosterman-Salem number $C_R$ with $C_R \leq \alpha$.

\item If $\alpha > 2$, only a finite number of odd order finite rings which are not fields have $C_R \leq \alpha$. All but at most a finite number of finite fields have $C_R \leq \alpha$.

\item $\lim_{|F| \to \infty} C_F = 2$ where the limit is taken over any sequence of finite fields.

\item Every finite ring has $1 \leq C_R \leq \sqrt{|R^*|}$. The rings with $C_R=1$ are exactly the finite Boolean rings. 

\item The finite rings with $C_R = \sqrt{|R^*|}$ are called extremal rings. A finite field is extremal if and only if it has order $2, 3$ or $4$. For any finite ring $R$, $S=R \times B$, where $B$ is a nontrivial Boolean ring, has $S$ an extremal ring.

\item Every finite non-Boolean ring has $C_R \geq \sqrt{2}$. 

\end{itemize}
\end{theorem}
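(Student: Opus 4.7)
The plan is to reduce the theorem to statements about local rings by exploiting the fact that Kloosterman sums factor over direct products, and then to combine Plancherel's identity with the Weil--Deligne bounds for fields and Hensel-type lifts for local non-field rings. First I would prove the multiplicativity lemma: when $R = R_1 \times R_2$, characters, units, and inversion on units all factor, so
\[
K_{R}((m_1,m_2),(n_1,n_2)) = K_{R_1}(m_1,n_1)\, K_{R_2}(m_2,n_2).
\]
Combined with the primary decomposition of a finite ring into local rings and the comparison $C_{R_1\times R_2} \geq \sqrt{|R_1^*|}\,C_{R_2}$ obtained by taking the trivial character on $R_1$, this reduces every bullet to a statement about local rings together with the multiplication formula. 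The trivial upper bound $C_R \leq \sqrt{|R^*|}$ is then the triangle inequality.

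For the lower bound $C_R\geq 1$ and the characterization $C_R=1\Leftrightarrow R$ Boolean, I would use Plancherel $\sum_{m\in R}|K(m,n)|^2 = |R|\,|R^*|$: for any fixed $n\neq 0$ some $|K(m,n)|\geq\sqrt{|R^*|}$, so $C_R\geq 1$. If $C_R=1$ this identity is saturated, hence $|K(0,n)|=\sqrt{|R^*|}$ for every $n\neq 0$; since $K(0,n)=\sum_{y\in R^*}\chi_n(y)$, a second Plancherel in $n$ forces $|R^*|=1$, which in turn makes $R$ Boolean (the inclusion $1+J\subseteq R^*$ kills the Jacobson radical $J$, and the semisimple quotient $\prod M_{n_i}(\F_{q_i})$ has trivial unit group only when each factor is $\F_2$). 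For the non-Boolean bound $C_R\geq\sqrt 2$, I would split on whether $R$ has a nontrivial Boolean direct factor $B$: if $R=R_0\times B$ then $R_0$ is itself non-Boolean, so $|R_0^*|\geq 2$ and the extremality formula below gives $C_R=\sqrt{|R_0^*|}\geq\sqrt 2$; otherwise every local factor of $R$ is non-Boolean and multiplicativity reduces us to verifying $C_L\geq\sqrt 2$ for each local non-Boolean $L$, which follows from direct computation for $\F_3$, $\mathbb Z/4$ and $\F_2[\epsilon]/\epsilon^2$ (each extremal with $C=\sqrt 2$) and from the Weil bound for larger residue fields. For the extremality of $R\times B$, I would take the character $((0,0),(0,n_B))$ with $n_B\neq 0$ and $\chi_{n_B}$ trivial on $B^*=\{(1,\dots,1)\}$, giving $|K|=|R^*|$ and matching the upper bound $\sqrt{|(R\times B)^*|}=\sqrt{|R^*|}$; direct computation shows $\F_2,\F_3,\F_4$ are the only extremal finite fields (Weil, plus one hand check at $q=5$, rules out $q\geq 5$).

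For the bullet $\lim_{|F|\to\infty} C_F=2$, Weil gives $C_F\leq 2\sqrt{q/(q-1)}\to 2$ and the matching lower bound $C_F\to 2$ comes from Katz's Sato--Tate equidistribution of Kloosterman angles. The remaining two bullets then follow by combining this with the claim that $C_R\to\infty$ for odd non-field rings of growing size: if $R$ has two or more local factors, multiplicativity together with $|R_i^*|\geq 2$ for odd local $R_i$ forces $C_R\geq \sqrt{|R_1^*|}\,C_{R_2}\to\infty$ as either a factor grows or the number of factors grows; if $R$ is itself a local non-field ring, a Hensel-type computation along the $\mathfrak m$-adic filtration (illustrated by $\mathbb Z/p^2\mathbb Z$, where characters divisible by $p$ reduce $K_R$ to $p\cdot K_{\F_p}$ and so give $C_R\gtrsim 2\sqrt p$ via the Weil bound on $\F_p$) provides the growth. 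The main obstacle will be this last sub-case: producing a clean uniform lower bound on $C_L$ for an arbitrary local non-field ring $L$ as $|L|\to\infty$, which requires carrying out the Hensel lift along the full filtration by powers of the maximal ideal and pairing it with a Weil-type bound at the residue-field level so as to turn the combinatorial structure of the lift into an explicit growing estimate on $C_L$.
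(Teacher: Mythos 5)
Your proposal correctly identifies multiplicativity of the Kloosterman sums over products, a trivial upper bound $C_R\leq\sqrt{|R^*|}$, and the Weil/Katz inputs for the field case. Moreover, your ``slice'' Plancherel argument -- fixing $n\neq 0$ and noting $\sum_{m\in R}|K(m,n)|^2=|R|\,|R^*|$, so that some nonzero $(m,n)$ has $|K(m,n)|\geq\sqrt{|R^*|}$ -- is a cleaner route to $C_R\geq 1$ than what the paper does (the paper's analogous Plancherel argument only yields $C_R>1/\sqrt 2$, and it reaches $C_R\geq 1$ only after going through the Jacobson radical and the product formula). Your derivation of $|R^*|=1$ from $C_R=1$ by a second Plancherel in $n$, and then concluding $R$ is Boolean, is also correct. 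These are genuine improvements.

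However there are two structural gaps that affect the main assertions. First, your global framework -- ``primary decomposition of a finite ring into local rings'' -- is a commutative fact. The theorem is stated for arbitrary finite associative rings with unit, and these do \emph{not} decompose into products of local rings: the semisimple factors can be full matrix rings $\Mat_n(F)$ with $n\geq 2$, which are not local. The correct structural tool, used by the paper, is the short exact sequence $0\to J\to R\to R/J\to 0$ with $J$ the Jacobson radical, together with Artin--Wedderburn giving $R/J\cong\Mat_{n_1}(F_1)\times\cdots\times\Mat_{n_k}(F_k)$. If you want to restrict to commutative rings you can use local decomposition, but then you have proved a strictly weaker statement.

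Second, the core finiteness result -- that $C_R\to\infty$ for odd non-field rings of growing order -- is left unresolved in your plan, and you say so yourself: you flag the ``Hensel lift along the full $\mathfrak m$-adic filtration'' as the main obstacle. In fact no filtration is needed. The paper does this in one step with Lemma~\ref{lem: pullback}: pulling back characters of $S=R/J$ along $\pi:R\to S$ gives $\hat H_R(m,n)=\frac{|J|}{|R|^2}\sum_{x\in S^*}\chi_m(-x)\chi_n(-x^{-1})$, whence $C_R\geq C_{R/J}\,|J|^{1/2}$. Your computation for $\mathbb Z/p^2\mathbb Z$ is exactly this lemma specialised to one example; the point is that the single quotient $R\to R/J$ already suffices uniformly, and there is no need to descend through powers of $\mathfrak m$. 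Combined with Nakayama's lemma -- if $J\neq 0$ and $R/J=F$ is a field then $J/J^2$ is a nonzero $F$-module, so $|F|$ divides $|J|$, and since $|J|\leq\alpha^2$ this bounds $|F|$ and hence $|R|$ -- the finiteness follows without any further input at the residue-field level. This same lemma also repairs the gap in your non-Boolean bullet: your case analysis ($\F_3$, $\mathbb Z/4$, $\F_2[\epsilon]/\epsilon^2$, and Weil for larger residue fields) omits local rings with residue field $\F_2$ or $\F_3$ and radical of size $\geq 4$ (e.g.\ $\mathbb Z/8\mathbb Z$); with $C_R\geq C_{R/J}|J|^{1/2}\geq 1\cdot\sqrt 2$ for any non-field local ring, that case disappears.
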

These results are illustrated in Figure \ref{fig:KSnum}.

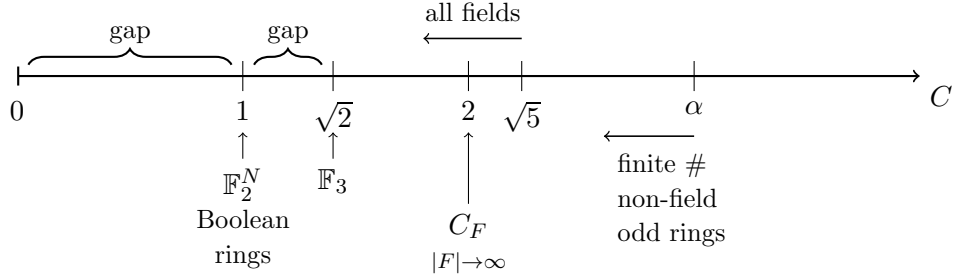
\begin{figure}[h]
  \centering
  \begin{tikzpicture}[xscale=3.0]
  \draw [thick, |->] (0,0) -- (4,0);
  \node[below] at (0,-0.2){0};
  \node[below right] at (4,0){$C$};
  \draw (1,-0.2) -- (1,0.2);
  \node[align=center, below] at (1,-0.2){$1$};
  \draw[->] (1,-1.1) -- (1,-.75);
  \node[align=center, below] at (1,-1.1){$\mathbb{F}_2^N$\\ \small Boolean \\ \small rings};
  \draw (1.4,-0.2) -- (1.4,0.2);
  \node[align=center, below] at (1.4,-0.2){$\sqrt 2$};
  \draw[->] (1.4,-1.1) -- (1.4,-.75);
  \node[align=center, below] at (1.4,-1.1){$\mathbb{F}_3$};
  \draw (2,-0.2) -- (2,0.2);
  \node[align=center, below] at (2,-0.2){$2$};
  \draw[->] (2,-1.7) -- (2,-.75);
  \node[align=center, below] at (2,-1.7){$C_F$\\ $\scriptstyle |F|\to\infty$};

  \draw (2.236,-0.2) -- (2.236,0.2);
  \node[align=center, below] at (2.236,-0.2){$\sqrt 5$};


  \draw (3,-0.2) -- (3,0.2);
  \node[align=center, below] at (3,-0.2){$\alpha$};
  \draw[<-, semithick] (2.6,-0.8) -- (3,-0.8);
  \node[align=left, below] at (2.9,-0.9){\small finite \# \\ \small non-field \\ \small odd rings} ;

\draw [decorate,decoration={brace,amplitude=6pt}, thick]
(0.05,0.15) -- (0.95,0.15);
\node [above] at (.5,.3){\small gap};

\draw [decorate,decoration={brace,amplitude=6pt}, thick]
(1.05,0.15) -- (1.35,0.15);
\node [above] at (1.2,.3){\small gap};


\draw [<-, semithick] (1.8,0.5) -- (2.236,0.5);
\node [above, align=center] at (2.02,0.6){\small all fields};

\end{tikzpicture}
  \caption{Kloosterman-Salem numbers}
  \label{fig:KSnum}
\end{figure}




\vskip.125in 

\subsection{A graph theoretic viewpoint} The methods of this paper can be used to yield some light on Erd\H os-type problems in geometric combinatorics and related graph theoretic questions. This subsection is dedicated to a brief discussion of these topics. 

Let $R$ be a finite associative ring with a unit and define a graph, called the hyperbola graph of $R$, with vertices being the elements of $R^2$ and two vertices $x$ and $y$ are connected by an edge if $x-y \in H$, where $H$ is a hyperbola defined above. 

\begin{definition}
The {\it clique} number of a graph is the number of vertices in the largest complete subgraph.
The \emph{independence}\/ number of a graph is the number of vertices in the largest edgeless subgraph.
The {\it chromatic} number of a graph is the smallest number of distinct colors for the vertices such that if two vertices are connected by an edge, they are of different color.
\end{definition}

\begin{definition} The {\it spectrum} of a graph is the collection of eigenvalues of its adjacency matrix. A graph is regular if all its vertices have the same degree $d$ and for such a graph $d$ is the largest eigenvalue of the adjacency matrix, and indeed the spectrum lies in the interval $[-d,d]$. The value $-d$ is in the spectrum of a $d$-regular graph if 
and only if the graph is bipartite. For a non-bipartite graph, the {\it spectral gap} is the size of the gap between the largest and 2nd largest eigenvalue. The spectral gap is defined to be zero if and only if $d$ has multiplicity $\geq 2$ in the spectrum. \end{definition} 

\vskip.125in 

\begin{theorem} \label{chromatic} Let $R$ be an associative finite ring with a unit. Let $C_R$ denote the Kloosterman-Salem number of $R$. Then: 
\begin{itemize}
\item The hyperbola graph is a regular graph of degree $d=|R^*|$ and it is connected and not bipartite if and only if $C_R < \sqrt{|R^*|}$ i.e., the ring is not extremal. 
\item The spectrum of the hyperbola graph consists exactly of $|R|^2$ times the Fourier coefficients of the hyperbola's characteristic function. 
\item For a non-extremal ring, the spectral gap of the hyperbola graph is $|R^*|-C_R \sqrt{|R^*|}$. 
\item In the case that the hyperbola graph is connected and not bipartite (i.e. $R$ is non-extremal), a random walk on the graph is mixing i.e. converges to the uniform distribution at a rate determined by the spectral gap. More precisely for every starting node $i$, the probability $p_{ij}^t$ that after $t$ steps in a uniform random walk on the hyperbola graph, that we end up at vertex $j$ satisfies:
$$ \left|p_{ij}^t - \frac{1}{|R|^2} \right| \leq \left(\frac{C_R}{\sqrt{|R^*|}}\right)^t.$$
\item The independence number of the hyperbola graph of $R$ is at most $C_R|R|^2|R^*|^{-\frac 12}$.
\item The chromatic number of the hyperbola graph of $R$ is at least $\dfrac{{|R^{*}|}^{\frac{1}{2}}}{C_R}$. 
\item In particular, if $R$ is a finite field of order $q$, then the chromatic number of the hyperbola graph is at least $\frac{1}{2.14}\sqrt{q-1}$. 

\end{itemize}
\end{theorem}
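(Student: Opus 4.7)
My plan is to realize the hyperbola graph as the Cayley graph of the additive group $R^2$ with connection set $H$, and then apply the standard spectral theory of Cayley graphs on finite abelian groups. First note that $H=-H$ since $(-x)(-y)=xy$ in any ring with unit, so the graph is a well-defined simple undirected graph; the map $x \mapsto (x,x^{-1})$ is a bijection from $R^*$ onto $H$ (using that in a finite ring with unit, left inverses are right inverses), so the graph is regular of degree $|H|=|R^*|$.

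Fix a non-degenerate additive character $\chi$ of $R$ and index the characters of $R^2$ by $(m,n) \in R^2$ via $(x,y)\mapsto \chi(mx+ny)$. The standard Cayley-graph formula then gives the eigenvalues as
$$
\lambda_{(m,n)}=\sum_{(x,y)\in H}\chi(mx+ny)=\sum_{x\in R^*}\chi(mx+nx^{-1})=|R|^2\,\widehat{H}(m,n),
$$
delivering the second bullet. Since these sums are real (as $H=-H$), the principal eigenvalue is $|R^*|$ and the remaining eigenvalues all lie in $[-C_R\sqrt{|R^*|},\,C_R\sqrt{|R^*|}]$, with the extremum attained by definition of $C_R$. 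A regular graph is connected iff its degree has multiplicity one in the spectrum, and is non-bipartite iff $-d$ is not an eigenvalue; both hold precisely when $C_R\sqrt{|R^*|}<|R^*|$, i.e.\ when $C_R<\sqrt{|R^*|}$. This yields the first bullet, and the gap from $|R^*|$ to the next-largest absolute eigenvalue is the third.

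The fourth bullet is a standard consequence of the spectral decomposition of the transition matrix $P=A/|R^*|$ in the character basis: writing $p_{ij}^t-|R|^{-2}$ as a sum over the $|R|^2-1$ non-principal characters of terms each bounded in absolute value by $(C_R/\sqrt{|R^*|})^t/|R|^2$ yields the claim. For the fifth bullet I would apply the expander mixing lemma to an independent set $I$: combining $e(I,I)=0$ with $\bigl|e(I,I)-|R^*||I|^2/|R|^2\bigr|\leq C_R\sqrt{|R^*|}\,|I|$ gives $|I|\leq C_R|R|^2/\sqrt{|R^*|}$. The sixth bullet then follows from the elementary fact that the chromatic number of any graph is at least $|V|/\alpha$.

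For finite fields of order $q$ (the seventh bullet), Weil's estimate on Kloosterman sums gives $C_F\leq 2\sqrt{q/(q-1)}$, which is at most $2.14$ for $q\geq 8$; the cases $q\leq 7$ are handled trivially, since $\sqrt{q-1}/2.14<1$ for $q\leq 5$, and for $q=7$ one verifies the hyperbola graph has at least one edge, forcing the chromatic number to be at least $2$. The main subtlety, in my view, is the third bullet: a priori a negative-valued Kloosterman sum could leave the ordinary second-largest eigenvalue $\lambda_2$ strictly smaller than $C_R\sqrt{|R^*|}$. The resolution is that ``spectral gap'' must be read as the gap from the principal eigenvalue to the next-largest \emph{in absolute value}, which is precisely $C_R\sqrt{|R^*|}$ by definition and is the quantity controlling both the combinatorial estimates and the mixing rate.
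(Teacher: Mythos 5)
Your proposal is correct and follows essentially the same approach as the paper: diagonalizing the Cayley graph of $(R^2,+)$ with connection set $H$ via additive characters, reading off the spectral gap and mixing rate, and bounding the independence (hence chromatic) number via the Fourier/discrepancy estimate, which you phrase as the expander mixing lemma while the paper reproves it directly as Theorem~\ref{thm: analyticalbound}. The last bullet is handled the same way: the Weil bound gives $C_F\leq 2/\sqrt{1-1/q}\leq 2.14$ for $q\geq 8$, with the handful of small $q$ checked directly.
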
 
As an aside, note that any sequence of distinct finite rings with $C_R = o(\sqrt{|R^*|})$ yields a sequence of hyperboloid graphs with chromatic number tending to infinity.

\medbreak

\section{Proof of Theorem \ref{hyperbola}}
\label{section: proof}

\vskip.125in 

\subsection{Basic setup} 

Let $R$ be a finite ring, which is associative with identity but not necessarily commutative.
We view the hyperbola as $H=\times^{-1}(1)$ where $\times: R \times R \to R$ is the ring multiplication.
Thus 
$$
H=\{ (x,y) \in R^2: xy=1 \}.
$$ 
Clearly $|H|=|R^*|$ where $R^*$ is the unit group of $R$. 

\vskip.125in 

We now identify $R$ with its Pontryagin dual as its underlying additive group is finite abelian. We then identify the Pontryagin dual of $R^2$ with itself accordingly. The Haar measure is the counting measure normalized so that the  entire space has measure $1$. With this notation, the Fourier transform of the hyperbola $H  \subseteq R^2$ is given by  
$$ \hat{H}(m) = \frac{1}{|R|^2} \sum_{(x,y) \in R^2} H(x,y) \chi_m(-(x,y))
= \frac{1}{|R|^2} \sum_{x \in R^*} \chi_{m_1}(-x)\chi_{m_2}\left(-\frac{1}{x}\right),$$ where $\chi_m$ is the character in the dual group corresponding to $m \in R$ under the identification of $R$ with its dual, and $m=(m_1,m_2) \in R^2$. 

When the underlying abelian group of $R$ is cyclic, we can write $\chi_{m}(x)=\chi(mx)$ where $\chi$ is a fixed non-trivial character and this becomes the 
well-known Kloosterman sum
$$
\hat{H}(m) = \frac{1}{|R|^2} \sum_{x \in R^*} \chi \left(-m_1x -\frac{m_2}{x}\right)
$$
for $m=(m_1,m_2) \in R^2$. 

When $R$ is a finite field, it is well-known \cite{weil1948exponential} that 
\begin{equation}
  \label{eq: weil bound}
  |\hat{H}(m)| \leq 2 |R|^{-2}\sqrt{|R|} \quad\mbox{for $m\not=0$}.
\end{equation}
Thus the hyperbola is a pure $C$-Salem set  for some 
$$C \leq 2\sqrt{|R|/|R^*|}=\frac{2}{\sqrt{1-\frac{1}{|R|}}}$$ when $R$ is a finite field. 

\begin{definition}[Kloosterman-Salem number]
The \emph{Kloosterman-Salem number}\/ of $R$ is the infimum of numbers $C > 0$ such that
$$|\hat{H}(m)| \leq C|R|^{-2}|R^*|^{\frac{1}{2}}$$
for all $m \not=(0,0)$.
We denote this number $C_R$.
\end{definition}
 The Kloosterman-Salem number is clearly finite and non-negative for any finite ring.
Note that 
$$C_R=\frac{|R|^2}{\sqrt{|R^*|}} \max_{m \neq (0,0)} |\hat{H}(m)|.$$ 
If the Kloosterman-Salem number of $R$ is $C$, we say that $R$ is a \emph{pure $C$-Salem set}.

The vertical equidistribution of Kloosterman sums over finite fields, established by Nicholas Katz \cite{katz1988gauss}, implies that the constant 2 in Weil's bound is asymptotically sharp:
$$
\lim_{|F| \to \infty} C_F = 2,
$$
where the limit is taken over finite fields $F$.
We will establish that for any threshold $0 < \alpha < \infty$, there are only finitely many finite rings of odd order aside from fields with $C_R < \alpha$.
This means for thresholds $0 < \alpha < 2$ there are at most finitely many odd order finite rings with $C_R < \alpha$ whereas for thresholds $2 < \alpha < \infty$ almost all fields have $C_R < \alpha$ whereas only finitely many non field, odd order finite rings have $C_R < \alpha$.

In particular this means that any sequence $\{ R_n \}$ of distinct finite odd order rings with  Kloosterman-Salem number uniformly bounded, are eventually fields, in the sense that there exists $N>0$, such that for $n \geq N$, $R_n$ is a field.
This is the essence of Theorem \ref{hyperbola}. 

\subsection{A geometric criterion}
\label{geomcriterion}

An important set that encodes the connection between addition $+$ and multiplication $\times$ in the ring $R$ is given by 

\begin{eqnarray*}
N(R) &=& ( \times \circ - )^{-1} (1) \\ 
&=& (-)^{-1}(H) \\
&=& \{ (x,y) \in R^2 \times R^2: x-y \in H \} \\
&=& \{ (x,y) \in R^2 \times R^2: (x_1-y_1)(x_2-y_2)=1 \}
\end{eqnarray*}
where
$- : R^2 \times R^2 \to R^2$ is subtraction and $\times: R \times R \to R$ is multiplication.

\vskip.125in 

Similarly, we define 
$$N(E) = \{ (x,y) \in E \times E: x-y \in H \} = \{ (x,y) \in E \times E: (x_1-y_1)(x_2-y_2)=1 \}$$
and let $n(E) = |N(E)|$. 

\vskip.125in 

The next result relates the Kloosterman-Salem number to the size of the set $N(E)$.
\begin{theorem}
\label{thm: analyticalbound}
Let $R$ be a finite ring with Kloosterman-Salem number $C$. 
Then any set $E \subset R^2$ with $|E| > \frac{C|R|^2}{|R^*|^{\frac{1}{2}}}$ has $n(E) > 0$. More precisely, there exist $e_1, e_2 \in E$ 
such that $e_1-e_2 \in H$.
\end{theorem}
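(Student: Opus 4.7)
The plan is to establish this by a standard Fourier-analytic expansion of $n(E)$ in terms of the Fourier transforms of $\mathbf{1}_H$ and $\mathbf{1}_E$, then extract a main term from the trivial frequency and bound the remaining frequencies using the Kloosterman-Salem hypothesis.

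First I would write $n(E) = \sum_{x,y\in R^2} f(x)f(y)\,\mathbf{1}_H(x-y)$ where $f=\mathbf{1}_E$, and then apply Fourier inversion $\mathbf{1}_H(z)=\sum_m \hat H(m)\chi_m(z)$ together with the identities $\sum_x f(x)\overline{\chi_m(x)}=|R|^2\hat f(m)$ and $\sum_y f(y)\chi_m(-y)=|R|^2\hat f(m)$ that follow from the normalization used in the paper. This collapses to
\[
n(E)=|R|^4\sum_{m\in R^2}\hat H(m)\,|\hat f(m)|^2 .
\]

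Next I would isolate the $m=(0,0)$ contribution. Using $\hat H(0)=|H|/|R|^2=|R^*|/|R|^2$ and $\hat f(0)=|E|/|R|^2$, this term equals $|R^*||E|^2/|R|^2$, which plays the role of the main term; it is the expected count if $E$ were uniformly random of the same density. The remaining frequencies form the error term
\[
\mathcal{E} \;=\; |R|^4\sum_{m\neq (0,0)}\hat H(m)\,|\hat f(m)|^2 .
\]
For these I would pull out $\max_{m\neq 0}|\hat H(m)|\leq C|R|^{-2}|R^*|^{1/2}$, which is exactly the definition of the Kloosterman-Salem number, and control the remaining sum of squares by Parseval: $\sum_m |\hat f(m)|^2 = |E|/|R|^2$. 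This gives $|\mathcal{E}|\leq C|R^*|^{1/2}|E|$.

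Combining, one obtains $n(E)\geq \frac{|E|}{|R|^2}\bigl(|R^*||E|-C|R^*|^{1/2}|R|^2\bigr)$, which is strictly positive precisely under the hypothesis $|E|>C|R|^2/|R^*|^{1/2}$. Finally, since $(0,0)\notin H$ (because $0\cdot 0\neq 1$), the relation $e_1-e_2\in H$ automatically forces $e_1\neq e_2$, yielding the required pair. There is no serious obstacle here; the only subtlety worth double-checking is the normalization convention for $\hat f$ so that the constants in the main term and the error term line up exactly with the threshold $C|R|^2/|R^*|^{1/2}$.
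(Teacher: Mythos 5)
Your proof is correct and follows essentially the same route as the paper: expand $n(E)$ via the Fourier transforms of $\mathbf{1}_E$ and $\mathbf{1}_H$, isolate the zero frequency as the main term $|R^*||E|^2/|R|^2$, and bound the remaining "discrepancy" term using the Kloosterman-Salem bound together with Plancherel to get $C|R^*|^{1/2}|E|$. The normalization checks out, and your added observation that $(0,0)\notin H$ (so the two points must be distinct) is a nice small refinement the paper leaves implicit.
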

\begin{proof}
Let $q=|R|$. Then we have: 
\begin{eqnarray*}
n(E) &=& |\{ (x,y) \in E \times E : x-y \in H \}| \\
&=& \sum_{x,y} E(x)E(y)H(x-y) \\
&=& q^4 \sum_{m} |\hat{E}(m)|^2 \hat{H}(m) \\
&=& q^4 \sum_{m \neq 0} |\hat{E}(m)|^2 \hat{H}(m) + \frac{|E|^2|R^*|}{q^2} \\
&=& D(E) + \frac{|E|^2|R^*|}{q^2}
\end{eqnarray*}

where $D(E) = \sum_{m \neq 0} |\hat{E}(m)|^2 \hat{H}(m)$ is called the discrepancy of the set $E$ relative to the hyperbola $H$.

As $H$ is a pure $C$-Salem set of size $|R^*|$, we have 
$$
|D(E)| \leq \left( \sum_{m} |\hat{E}(m)|^2 \right) \cdot Cq^{-2}|R^*|^{\frac{1}{2}} = q^2|E|Cq^{-2}|R^*|^{\frac{1}{2}}
$$
where the last step follows by the Plancherel theorem.
Thus
$$|D(E)| \leq C|E||R^*|^{\frac{1}{2}}.$$

As $n(E)=D(E) + \frac{|E|^2|R^*|}{q^2}$ as long as $|D(E)| < \frac{|E|^2|R^*|}{q^2}$, we will have $n(E) > 0$. 
This is certainly the case when 
$$
C|E||R^*|^{\frac{1}{2}} < \frac{|E|^2|R^*|}{q^2}
$$
which happens when $|E| > \frac{Cq^2}{|R^*|^{\frac{1}{2}}}$. Thus Theorem \ref{thm: analyticalbound} is proven.

\end{proof}

\subsection{Bound on the size of ideals} 
\label{idealsize}

The sum-product formulation encapsulated in Theorem~\ref{thm: analyticalbound} leads directly to a bound on the size of proper ideals of $R$ in terms of the Kloosterman-Salem number $C$.

\begin{theorem}
\label{thm: idealbound}
Let $R$ be a finite ring with unit, with Kloosterman-Salem number $C$. Then any proper left (or right) ideal $I$ of $R$ has 
$$|I| \leq \frac{C|R|}{|R^*|^{\frac{1}{2}}}.$$
\end{theorem}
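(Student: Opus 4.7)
The plan is to reduce the statement directly to Theorem~\ref{thm: analyticalbound} by choosing a test set $E \subset R^2$ as large as possible subject to $n(E) = 0$. The contrapositive of Theorem~\ref{thm: analyticalbound} says that any $E$ with $n(E)=0$ satisfies $|E| \leq C|R|^2 / |R^*|^{1/2}$, so the goal is to find an $E$ associated to $I$ whose size is exactly $|I|\cdot |R|$.

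The geometric observation driving the choice of $E$ is that a proper ideal of $R$ cannot contain $1$. If $I$ is a right ideal, I will take $E = I \times R$; if $I$ is a left ideal, I will take $E = R \times I$. In the right-ideal case, suppose $(a,b),(c,d) \in E$ with $(a,b)-(c,d) \in H$. Then $a - c \in I$ and $b - d \in R$, while $(a-c)(b-d) = 1$. But $(a-c)(b-d) \in I \cdot R \subseteq I$ since $I$ is a right ideal, which would force $1 \in I$, contradicting that $I$ is proper. Hence $n(E) = 0$. The left-ideal case is identical, using $R \cdot I \subseteq I$ applied to $(a-c)(b-d)$ with $b - d \in I$.

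Applying the contrapositive of Theorem~\ref{thm: analyticalbound} to this $E$ and using $|E| = |I| \cdot |R|$ gives
\[
|I|\cdot |R| \;\leq\; \frac{C|R|^2}{|R^*|^{1/2}},
\]
which after dividing by $|R|$ yields the desired inequality $|I| \leq C|R|/|R^*|^{1/2}$.

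I do not anticipate any substantive obstacle: the proof is essentially a one-line application of Theorem~\ref{thm: analyticalbound} after the correct choice of $E$. The only thing to be careful about is matching the sidedness of the ideal to the coordinate being restricted, so that the absorption property $I\cdot R \subseteq I$ or $R\cdot I \subseteq I$ applies to the product $(a-c)(b-d)$ in the correct order. It is worth noting that the more symmetric choice $E = I \times I$ is strictly worse: it only yields $|I|\leq C^{1/2}|R|/|R^*|^{1/4}$, so maximising $|E|$ by using a full copy of $R$ in one coordinate is essential for obtaining the stated sharper bound.
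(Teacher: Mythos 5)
Your proof is correct and is essentially identical to the paper's: both choose the product set $E = R \times I$ (or $I \times R$, depending on sidedness), observe that $n(E)=0$ because the ideal's absorption property would otherwise force $1 \in I$, and then apply the contrapositive of Theorem~\ref{thm: analyticalbound}. Your remark on matching the ideal's sidedness to the restricted coordinate, and on why $I \times I$ would give a weaker bound, is a nice bit of extra care but does not change the argument.
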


\begin{remark} As every ring (with unit) has at least the zero proper ideal, this in particular implies $C > 0$ for every finite ring. \end{remark}

\begin{proof}
We prove the theorem for proper left ideals. The proof for right ideals is similar. If $I$ is a proper left ideal then $E=R \times I \subseteq R^2$ has $n(E)=0$ 
as it is impossible to solve the equation $(x_1-y_1)(x_2-y_2)=1$ since $x_2-y_2 \in I$. Thus $|E|=|R||I| \leq \frac{C|R|^2}{|R^*|^{\frac{1}{2}}}$ by Theorem~\ref{thm: analyticalbound}. This completes the proof of Theorem \ref{thm: idealbound}. 
\end{proof}

We will see that the bound in Theorem~\ref{thm: idealbound} is sharp in the sense that for any fixed $0 < C < \infty$, only a limited class of finite rings satisfy it.

\subsection{Structure of finite rings}

Finite rings have a well studied structure, which we record here:
\begin{proposition}
  \label{algebra}
Let $R$ be a finite ring with unit.
\begin{enumerate}
\item $R$ contains a unique two-sided maximal ideal $J$, called the \emph{Jacobson radical}, such that $R/J$ is a semi-simple ring.
\item The semi-simple quotient $R/J$ is isomorphic to a product of matrix rings over finite fields.
\end{enumerate}
\end{proposition}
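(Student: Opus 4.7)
The plan is to assemble Proposition \ref{algebra} from three classical pillars of non-commutative algebra, since $R$ being finite makes it both left- and right-Artinian (in fact both left- and right-Noetherian), so the full machinery of Artinian ring theory applies.

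First, I would define $J=J(R)$ to be the intersection of all maximal left ideals of $R$; standard arguments show that this coincides with the intersection of all maximal right ideals, and with $\{r \in R : 1-xry \text{ is a unit for all } x,y \in R\}$, so in particular $J$ is a two-sided ideal, which is moreover contained in every maximal two-sided ideal. Since $R$ is finite (hence left-Artinian), the descending chain $J \supseteq J^2 \supseteq J^3 \supseteq \dots$ stabilises, and Nakayama's lemma forces $J$ to be nilpotent. The uniqueness of $J$ as the maximal ideal with semi-simple quotient then follows by showing that the quotient $R/J$ has trivial Jacobson radical: any element of $J(R/J)$ pulls back to a left ideal of $R$ contained in $J$ plus a nilpotent correction, and standard bookkeeping shows this forces $J(R/J)=0$. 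Combined with $R/J$ being Artinian, this is the definition of semi-simplicity in the sense that every (left) $R/J$-module is a direct sum of simple modules.

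Next, for part (2), I would invoke the Wedderburn--Artin structure theorem: any semi-simple Artinian ring decomposes as a finite product
\[
R/J \;\cong\; \prod_{i=1}^k M_{n_i}(D_i)
\]
where each $D_i$ is a division ring and each $n_i \geq 1$. The $D_i$ arise as endomorphism rings of the (finitely many) isomorphism classes of simple $R/J$-modules via Schur's lemma, and the isotypic decomposition of $R/J$ as a left module over itself supplies the product factorisation. Because $R$ is finite, each $D_i$ is a finite division ring, and Wedderburn's little theorem asserts that every finite division ring is commutative, so each $D_i$ is a finite field. This upgrades the decomposition to a product of matrix rings over finite fields, as claimed.

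The main obstacle is essentially expository rather than mathematical: all three inputs (existence/nilpotence of the Jacobson radical in the Artinian setting, the Wedderburn--Artin decomposition, and Wedderburn's little theorem) are entirely classical, so the proof amounts to citing them carefully and verifying that finiteness of $R$ legitimately puts us in the hypotheses of each. The one place where mild care is needed is checking that the $J$ defined via maximal left ideals really is two-sided and really is the \emph{unique} such ideal with semi-simple quotient; this is where one wants to rule out, for example, that some smaller two-sided ideal $I \subsetneq J$ also produces a semi-simple quotient (it cannot, because $J/I$ would then be a nilpotent ideal of the semi-simple ring $R/I$, forcing $J = I$).
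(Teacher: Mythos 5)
Your proposal is correct and follows essentially the same route as the paper: pass to the Jacobson radical to get a semisimple quotient, apply the Wedderburn--Artin structure theorem to decompose it as a product of matrix rings over division rings, and invoke Wedderburn's little theorem to identify those division rings as finite fields. The only difference is one of granularity—you spell out the construction and uniqueness of $J$ while the paper simply cites these facts to Lang—so the mathematical content is identical.
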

This was one of the first complete classification theorems in algebra.
We will sketch a proof, with references to Lang's \emph{Algebra}\/ \cite{lang2002algebra}, where the reader can find the details.
\begin{proof}
If $R$ is a finite ring with Jacobson radical $J$ then $R/J$ is finite and semisimple.
Since semisimple rings are direct products of simple rings (Chapter XVII, Theorem 4.4), it follows that $R/J$ is a finite product of simple rings.
Finally, every finite simple ring is a matrix ring over a finite field.

This last fact follows from two famous theorems.
First, as finite rings are Artinian, the Artin-Wedderburn theorem shows that finite simple rings are isomorphic to $\Mat_n(D)$, the $n \times n$ matrix ring over a finite division ring $D$.
It follows immediately from Wedderburn's theorem, which states that finite division rings are fields, that $D$ is a finite field.
\end{proof}

Now we will fix notation.
Let $R = \Mat_n(F)$ where $F$ is a finite field. Then $|R|=|F|^{n^2}$ and 
$$|R^*|=|GL_n(F)|=|F|^{n^2}\left(1-\frac{1}{|F|}\right)\left(1-\frac{1}{|F|^2}\right) \dots \left(1-\frac{1}{|F|^{n}}\right).$$ 
Thus 
$$\frac{|R^*|}{|R|} = \phi_R = \phi(n,|F|) = \left(1-\frac{1}{|F|}\right)\left(1-\frac{1}{|F|^2}\right) \dots \left(1 - \frac{1}{|F|^n}\right).$$
It will be convenient to have a uniform lower bound on $\phi(n,|F|)$.
  \begin{lemma}
    \label{lem: phi lower}
For all $n\geq 1$ and all finite fields $F$, we have $\phi(n,|F|)\geq 1/4$.
  \end{lemma}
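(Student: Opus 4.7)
My plan rests on two monotonicity observations: as a product of factors less than $1$, $\phi(n,q) = \prod_{k=1}^n (1 - q^{-k})$ is decreasing in $n$; and since each factor $1 - q^{-k}$ increases with $q$, $\phi(n,q)$ is increasing in $q = |F|$. Hence it suffices to verify the bound in the worst case $q=2$, $n \to \infty$, i.e., to show
\[
\prod_{k=1}^{\infty}\bigl(1 - 2^{-k}\bigr) \geq \frac{1}{4}.
\]

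The main tool I would use is the elementary inequality $\prod_k (1 - a_k) \geq 1 - \sum_k a_k$ for $a_k \in [0,1]$, which follows by a straightforward induction. For $q \geq 3$ this immediately gives
\[
\phi(n,q) \geq 1 - \sum_{k=1}^{\infty} q^{-k} = 1 - \frac{1}{q-1} \geq \frac{1}{2},
\]
which is more than enough. For $q = 2$ the naive application is vacuous, so I would split off a sufficiently long initial segment: compute the partial product
\[
\phi(4, 2) = \frac{1}{2} \cdot \frac{3}{4} \cdot \frac{7}{8} \cdot \frac{15}{16} = \frac{315}{1024}
\]
explicitly, then bound the tail via the same inequality as
\[
\prod_{k=5}^{\infty}(1 - 2^{-k}) \geq 1 - \sum_{k=5}^{\infty} 2^{-k} = \frac{15}{16}.
\]
Multiplying yields $\phi(\infty,2) \geq \frac{315}{1024} \cdot \frac{15}{16} = \frac{4725}{16384} \approx 0.288$, which exceeds $1/4$ and completes the proof.

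The only real subtlety is that $1/4$ is genuinely close to the true infimum $\prod_{k \geq 1}(1 - 2^{-k}) \approx 0.2887$ attained in the limit $q=2$, $n \to \infty$, so one cannot get away with splitting off too few explicit factors. Taking only $\phi(3,2) = 21/64$ paired with the tail bound $\geq 7/8$ gives $\phi(\infty,2) \geq 147/512 \approx 0.287$, which still works but is tight; anything shorter fails. A more sophisticated route via Euler's pentagonal number theorem would evaluate the infinite product exactly, but such machinery is unnecessary for the bound $1/4$ that the lemma actually requires.
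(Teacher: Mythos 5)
Your proof is correct but follows a genuinely different route from the paper's. Both begin with the same reduction: $\phi(n,q)$ is increasing in $q$ and decreasing in $n$, so it suffices to bound the infinite product $\alpha = \prod_{k\geq 1}(1-2^{-k})$ from below by $1/4$. From there the paper takes logarithms, expands $-\log(1-x)$ as a power series, exchanges the resulting double sum to obtain $-\log\alpha = \sum_{k\geq 1}\frac{1}{k(2^k-1)}$, and dominates this by $\sum_{k\geq 1}\frac{1}{k}2^{-(k-1)} = \log 4$. You instead use the elementary Weierstrass inequality $\prod_k(1-a_k)\geq 1-\sum_k a_k$, split off an explicit prefix to avoid the vacuity at $q=2$, and multiply $\phi(4,2)=315/1024$ against the tail bound $15/16$ to get $4725/16384 > 1/4$. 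Your argument is more elementary (no logarithms or series rearrangement, just a product inequality and a finite computation); the paper's is cleaner in that it avoids any numerical bookkeeping and dispatches the whole product in one inequality chain. One minor side remark: your claim that splitting off fewer than three factors fails is not quite accurate --- with two explicit factors one gets $\frac{3}{8}\cdot\frac{3}{4}=\frac{9}{32}>\frac14$, and even with one explicit factor one gets $\frac12\cdot\frac12=\frac14$, which still meets the non-strict bound the lemma asserts. This does not affect the correctness of your main argument.
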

  \begin{proof}
Since $\phi(n,|F|)=(1-\frac{1}{|F|})(1-\frac{1}{|F|^2}) \dots (1-\frac{1}{|F|^n})$ is increasing in $|F|$, the general case follows from the case $|F|=2$. 
As $\phi(n, 2)$ is monotonically decreasing as a function of $n$, it then suffices to establish that the infinite product
$$\alpha = \left(1-\frac{1}{2}\right)\left(1-\frac{1}{4}\right) \dots \left(1-\frac{1}{2^n}\right) \dots $$ is bounded below by $\frac{1}{4}$. A priori, by the monotone convergence theorem,  
$\alpha$ exists in the interval $[0,1)$. Taking logs and using the power series expansion for $\log(1-x)$ we get:
$$
-\log(\alpha) = \sum_{n=1}^{\infty} \sum_{k=1}^{\infty} \frac{1}{k2^{nk}}
$$
Exchanging the order of summation and summing the geometric series as a function of $n$, we get
$$
-\log(\alpha) = \sum_{k=1}^{\infty} \frac{\frac{1}{k2^k}}{1-\frac{1}{2^k}} =\sum_{k=1}^{\infty} \frac{1}{k(2^k-1)} \leq \sum_{k=1}^{\infty} \frac{1}{k} \left(\frac{1}{2}\right)^{k-1} = - 2\log\left(\frac{1}{2}\right)
$$
Thus $\log(\frac{1}{\alpha}) \leq \log(4)$ yielding $\alpha \geq \frac{1}{4}$ as desired.
  \end{proof}

If $R$ is a finite semisimple ring, then by Proposition \ref{algebra}
\begin{equation}
  \label{eq: semisimple}
  R=\Mat_{n_1}(F_1) \times \dots \times \Mat_{n_k}(F_k),
\end{equation}
where $F_1, \ldots, F_k$ are finite fields labelled so that $|F_1|^{n_1} \leq |F_2|^{n_2} \leq \dots \leq |F_k|^{n_k}$.
If $R$ is written as in \eqref{eq: semisimple}, we say that $R$ has $k$ \emph{semisimple factors}.
Note that
\[
R^*=GL_{n_1}(F_1)\times\cdots\times GL_{n_k}(F_k),
\]
and 
\[
|R^*|=|R|\phi(n_1,|F_1|)\cdots\phi(n_k,|F_k|).
\]

Finally, if $R$ is a finite ring with Jacobson radical $J$ then by Proposition \ref{algebra}, we have the short exact sequence of rings and ideals:
\[
0 \to J \to R \to \Mat_{n_1}(F_1) \times \dots \times \Mat_{n_k}(F_k) \to 0,
\]
where again $F_1, \dots, F_k$ are finite fields labelled so that $|F_1|^{n_1} \leq |F_2|^{n_2} \leq \dots \leq |F_k|^{n_k}$.

As the Jacobson radical has the property that if $a \in J$ then $1+a$ is a unit, it is easy to argue that the units of $R$ are exactly the elements that project to units of $R/J$ and so 
\[
|R^*| = |J||(R/J)^*|=|J||(R/J)|\phi(n_1,|F_1|)\dots\phi(n_k,|F_k|).
\]

\subsection{Finite simple rings}

We first establish some bounds in the restricted world of finite simple rings.
Specifically, we show the following.
\begin{proposition}
\label{pro: simplecase}
For a given threshold $\alpha>0$, all but finitely many finite simple rings with Kloosterman-Salem number less than $\alpha$ are fields or $2\times 2$ matrix rings over a field.
\end{proposition}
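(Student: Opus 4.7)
The plan is to combine the classification of finite simple rings (Proposition~\ref{algebra}, in particular that every finite simple ring is of the form $R=\mathrm{Mat}_n(F)$ for some finite field $F$) with the ideal-size bound of Theorem~\ref{thm: idealbound} and the uniform lower bound $\phi(n,|F|)\geq 1/4$ from Lemma~\ref{lem: phi lower}. The strategy is: if $n\geq 3$, we will force both $n$ and $|F|$ to be bounded in terms of $\alpha$, giving only finitely many such $R$; the cases $n=1$ (fields) and $n=2$ are precisely the announced exceptions.

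First, I would exhibit an explicit large proper left ideal in $R=\mathrm{Mat}_n(F)$ when $n\geq 2$. Fixing a codimension-one subspace $V\subset F^n$ of column vectors, the set $L_V=\{A\in\mathrm{Mat}_n(F):\text{every column of }A\text{ lies in }V\}$ is easily checked to be a left ideal, and it has size $|F|^{n(n-1)}=|F|^{n^2-n}$. Theorem~\ref{thm: idealbound} applied to $L_V$ then gives
\[
|F|^{n^2-n}\ \leq\ \frac{C|R|}{|R^{*}|^{1/2}}\ =\ \frac{C\,|F|^{n^2}}{|R^{*}|^{1/2}},
\]
so $|R^{*}|^{1/2}\leq C\,|F|^{n}$, i.e.\ $|R^{*}|\leq C^{2}|F|^{2n}$.

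Next, I would invoke Lemma~\ref{lem: phi lower} to get the matching lower bound $|R^{*}|=|F|^{n^2}\phi(n,|F|)\geq |F|^{n^2}/4$. Combining the two inequalities yields
\[
|F|^{n^2-2n}\ \leq\ 4C^{2}.
\]
For $n\geq 3$ we have $n(n-2)\geq 3$ and $|F|\geq 2$, so $2^{n(n-2)}\leq 4C^{2}$ forces $n\leq \log_{2}(4C^{2})/1+2$ (a finite bound depending only on $C$), and simultaneously $|F|\leq (4C^{2})^{1/n(n-2)}\leq (4C^{2})^{1/3}$. Thus $(n,|F|)$ ranges over a finite set once $n\geq 3$ is imposed, so only finitely many such $R$ can have $C_{R}<\alpha$. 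The remaining simple rings satisfy $n=1$ (fields) or $n=2$, giving the statement.

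The main (and essentially only) obstacle is the combinatorial lemma that $\mathrm{Mat}_n(F)$ carries a proper left ideal of order $|F|^{n^{2}-n}$: everything else is bookkeeping with Theorem~\ref{thm: idealbound} and Lemma~\ref{lem: phi lower}. Once that ideal is on the table, the exponent calculation $n^{2}-n$ vs.\ $n^{2}-2n$ is exactly what separates $n=2$ from $n\geq 3$, which is precisely why $2\times 2$ matrix rings must be listed as exceptions alongside fields.
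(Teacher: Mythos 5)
Your proof is correct and takes essentially the same route as the paper: exhibit a proper one-sided ideal of size $|F|^{n^2-n}$ in $\Mat_n(F)$, feed it into Theorem~\ref{thm: idealbound}, and then use Lemma~\ref{lem: phi lower} to reduce to $|F|^{n(n-2)}\leq 4C^2$, which bounds $n$ and (for $n\geq 3$) $|F|$. One small correction: the set $L_V=\{A:\text{every column of }A\text{ lies in }V\}$ is a \emph{right}\/ ideal, not a left ideal --- for $A\in L_V$ and arbitrary $B$, the columns of $BA$ are $B(Ae_i)$ with $Ae_i\in V$, and $B$ need not preserve $V$, whereas $(AB)e_i=A(Be_i)$ does lie in the column space of $A\subseteq V$. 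This does not affect your argument since Theorem~\ref{thm: idealbound} applies equally to right ideals; if you prefer a genuine left ideal you can instead take, as the paper does, $\{A: Ae_n=0\}$ (matrices whose last column is zero), which has the same cardinality.
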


\begin{proof}[Proof of Proposition \ref{pro: simplecase}]
Let $M$ be the maximal left ideal in $\Mat_n(F)$ consisting of matrices with an all zero last column.
Then $|M|=|F|^{n^2-n}$ and so by Theorem~\ref{thm: idealbound} we have 
$$
|F|^{n^2-n} \leq \frac{C|F|^{n^2}}{\sqrt{|F|^{n^2}\phi_R(n, |F|)}}.
$$
Hence by lemma \ref{lem: phi lower}
\[
\frac 14 |F|^{n(n-2)}\leq |F|^{n(n-2)}\phi_R(n,|F|)\leq C^2.
\]
Since $|F|>1$, it is clear that $n$ is bounded.
If $n>2$, then
\[
|F|\leq (4C^2)^{1/n(n-2)}.
\]
Thus for any fixed $C$, there are finitely many choices of $n$, and for $n>2$ there are finitely many choice of $F$, which proves the theorem.
\end{proof}

\smallskip

\subsection{Kloosterman sums in matrix rings}
\label{matrix kloosterman}

We now will eliminate the case of $\Mat_2(F)$ in Proposition~\ref{pro: simplecase}.
By an explicit computation we will show that if $R=\Mat_2(F)$, then
\[
C_R \geq \frac{|F|-1+\frac{1}{|F|}}{\sqrt{(1-\frac{1}{|F|})(1-\frac{1}{|F|^2})}},
\]
which implies that only finitely many $2 \times 2$ matrix rings have a Kloosterman-Salem number less than a given threshold.
\begin{proposition}
\label{pro: matrix}
Let $\alpha \in (0, \infty)$ then there are only finitely many matrix rings $\Mat_2(F)$ with Kloosterman-Salem number less than $\alpha$. 
Thus there are only finitely many non-field, simple rings with Kloosterman-Salem number less than $\alpha$.
\end{proposition}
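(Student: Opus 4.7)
The strategy is to exhibit an explicit non-zero character $m=(M_1,M_2)\in R^2$ for $R=\Mat_2(F)$ whose Fourier coefficient $|\hat{H}(m)|$ is so large that $C_R$ must grow with $q=|F|$. Since Proposition \ref{pro: simplecase} already leaves only fields and $2\times 2$ matrix rings as the possible finite simple rings with $C_R\leq\alpha$ for arbitrary $\alpha$, this single calculation will yield both sentences of the proposition at once.

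The character I would try is $M_1=M_2=E_{12}$, the $(1,2)$ matrix unit. Identifying $R$ with its dual via $\chi_m(X)=\psi(\Tr(mX))$ for a fixed non-trivial additive character $\psi$ of $F$, one has $\Tr(E_{12}X)=X_{21}$, and Cramer's rule gives $(X^{-1})_{21}=-X_{21}/\det X$. Thus the exponent in
\[
|R|^2\hat{H}(m)=\sum_{X\in GL_2(F)}\psi\bigl(X_{21}+(X^{-1})_{21}\bigr)
\]
simplifies to $X_{21}(1-1/\det X)$, which \emph{vanishes identically on the codimension-one subgroup} $SL_2(F)\subset GL_2(F)$. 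This is the key structural observation, and it is what allows the remaining piece of the sum to be evaluated by elementary orthogonality on $F$ rather than requiring Weil's bound for Kloosterman sums.

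The rest is bookkeeping. I split the sum by $d=\det X$. The contribution from $d=1$ is simply $|SL_2(F)|=q(q^2-1)$. For each $d\in F^*\setminus\{1\}$, an elementary count gives $q(q-1)$ matrices with $X_{21}=0$ and $\det X=d$, and $q^2$ matrices with $X_{21}=c\neq 0$ and $\det X=d$; writing $s=1-1/d\neq 0$, orthogonality on $F$ then yields $q(q-1)+q^2\sum_{c\neq 0}\psi(cs)=-q$. Summing over the $q-2$ admissible values of $d$ produces the exact evaluation
\[
\sum_{X\in GL_2(F)}\psi\bigl(X_{21}+(X^{-1})_{21}\bigr)=q(q^2-1)+(q-2)(-q)=q^3-q^2+q,
\]
so $|\hat{H}(m)|=q(q^2-q+1)/q^8$. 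Substituting into $C_R\geq|R|^2|\hat{H}(m)|/\sqrt{|R^*|}$ with $|R|=q^4$ and $|R^*|=q(q-1)^2(q+1)$, and simplifying, one recovers exactly the advertised bound
\[
C_R\geq\frac{q-1+\frac{1}{q}}{\sqrt{\bigl(1-\frac{1}{q}\bigr)\bigl(1-\frac{1}{q^2}\bigr)}},
\]
whose right-hand side is $\sim q$ as $q\to\infty$. The main obstacle is not analytic but combinatorial: one has to guess a character whose exponent degenerates on a subgroup as large as $SL_2(F)$, after which the calculation becomes routine and avoids any appeal to deep character-sum bounds.
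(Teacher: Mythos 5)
Your proposal is correct and follows essentially the same route as the paper: the paper evaluates $\hat{H}$ at the degenerate pair $\bigl(\begin{smallmatrix}1&0\\0&0\end{smallmatrix}\bigr),\bigl(\begin{smallmatrix}0&0\\0&-1\end{smallmatrix}\bigr)$, whose exponent $a(1/\Delta-1)$ likewise factors as (matrix entry) times (function of the determinant that vanishes on $SL_2(F)$), and then partitions $GL_2(F)$ into determinant cosets and applies orthogonality exactly as you do. Your choice $M_1=M_2=E_{12}$ is a cosmetically different but structurally equivalent degenerate character, and you arrive at the identical exact sum $q^3-q^2+q$ and the same lower bound on $C_R$.
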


Let us first describe the Kloosterman sums arising from a matrix ring $\Mat_n(F)$.
Recall 
$$
\hat{H}(A,B) = \frac{1}{|F|^{2n^2}} \sum_{C \in GL_n(F)} \chi_A(-C)\chi_B(-C^{-1})
$$ 
for $(A,B) \in \Mat_n(F) \times \Mat_n(F)$. 
We identify $\Mat_n(F) \times \Mat_n(F)$ with its Pontryagin dual in the following specific way.
The trace form $(A,B) \to \Tr(AB)$ is $F$-bilinear,  symmetric and non-degenerate. As $\Mat_n(F)$ is a $F$-vector space, every irreducible character  of $\Mat_n(F)$ is the composition of a linear functional followed by a fixed nontrivial irreducible additive character $\chi$ of $F$, i.e., of the form  $\chi(L(x))$.
As the trace form is non-degenerate, every such functional can be taken of the form $L(-) = \Tr(-B)$ or $L(-)=\Tr(A-)$ for suitable $A, B \in \Mat_n(F)$.
Due to this we may choose an identification of $\Mat_n(F) \times \Mat_n(F)$ with its Pontryagin dual such that 
$$ \hat{H}(A,B) = \frac{1}{|F|^{2n^2}} \sum_{C \in GL_n(F)} \chi(-\Tr(CA + BC^{-1})) $$ and we shall do so from now on. 

The group $GL_n(F)$ acts on $\Mat_n(F) \times \Mat_n(F)$ by $D \cdot (A,B) = (DA,BD^{-1})$ and it is easy to check that $\hat{H}(A,B)$ is constant on orbits. More precisely, it is a 
$GL_n(F)$ invariant:
$$ \hat{H}(DA,DB)=\hat{H}(A,B). $$

Since the trace is a similarity invariant, conjugating by $C^{-1}$ in the defining expression shows that $\hat{H}$ is symmetric also i.e.
$$
\hat{H}(A,B)=\hat{H}(B,A).
$$
This invariance and symmetry makes the evaluation of $\hat{H}(A,B)$ reduce to a relatively decent number of cases based on the ranks of the matrices 
$A$ and $B$. The non degenerate case of rank $2$ matrix $A$ reduces as $\hat{H}(A,B)=\hat{H}(\matid, A^{-1}B)$ and the coefficients 
$$
\hat{H}(\matid,C) = \frac{1}{|R|^2} \sum_{D \in GL_2(F)} \chi(-\Tr(D + CD^{-1}))
$$
are probably the most interesting. However we will only use one particular degenerate coefficient in our arguments:
$$
\hat{H}\left(\begin{bmatrix} 1 & 0 \\ 0 & 0 \end{bmatrix}, \begin{bmatrix} 0 & 0 \\ 0 & -1 \end{bmatrix}\right) 
=\frac{1}{|R|^2} \sum_{D \in GL_2(F)} \chi \left(-a+\frac{a}{\Delta}\right)
$$
where we write $D=\begin{bmatrix} a & b \\ c & d \end{bmatrix}$ and $\Delta=\det(D)$, $R=\Mat_2(F)$.
To evaluate this we need to enumerate the distribution of upper-left entries $a$ and determinants $\Delta$ amongst the matrices in $GL_2(F)$.


Partition $GL_2(F)$ into the left cosets of $SL_2(F)$:
\[
GL_2(F) = \bigcup_{\Delta \in F^*} G_{\Delta},
\]
where $G_{\Delta}$ are the matrices with determinant $\Delta$. 
A simple computation shows that for $\Delta\not=0$, there are $p^2$ matrices in $G_\Delta$ with any given fixed nonzero $a$ as upper-left entry and $p(p-1)$ matrices in $G_\Delta$ with upper-left entry $a=0$.

For fixed $\Delta \in \mathbb{F}-\{0, 1\}$, we have 
$$
\sum_{D \in G_{\Delta}} \chi \left(-a+\frac{a}{\Delta}\right) =  |F|^2 \sum_{a \in \mathbb{F}^*} \chi \left(\left(-1 + \frac{1}{\Delta}\right)a\right) + |F|(|F|-1) = - |F|
$$
by character orthogonality applied to $\chi$ on $\mathbb{F}$.
On the other hand for $\Delta=1$ we get
$$
\sum_{D \in G_{1}} \chi \left(-a + \frac{a}{\Delta}\right) = |SL_2(F)|=(|F|-1)|F|(|F|+1).
$$

Putting everything together we get

\begin{eqnarray*}
\hat{H}\left(\begin{bmatrix} 1 & 0 \\ 0 & 0 \end{bmatrix}, \begin{bmatrix} 0 & 0 \\ 0 & -1 \end{bmatrix}\right)  &=& 
\frac{1}{|R|^2} \sum_{D \in GL_2(F)} \chi \left(-a+\frac{a}{\Delta}\right) \\ &=& 
\frac{1}{|R^2|} ((|F|-1)|F|(|F|+1) - (|F|-2)|F|).
\end{eqnarray*}

Thus 
$$
\frac{|R|^{2}}{\sqrt{|R^*|}} \left|\hat{H}\left(\begin{bmatrix} 1 & 0 \\ 0 & 0 \end{bmatrix}, \begin{bmatrix} 0 & 0 \\ 0 & -1 \end{bmatrix}\right)\right| =\frac{|F|(|F|^2-|F|+1)}{|F|^2\sqrt{(1-\frac{1}{|F|})(1-\frac{1}{|F|^2})}}
$$
and so the Kloosterman-Salem number of $R=\Mat_2(F)$ satisfies the claimed bound:
$$
C_R \geq \frac{|F|-1+\frac{1}{|F|}}{\sqrt{(1-\frac{1}{|F|})(1-\frac{1}{|F|^2})}}.
$$
As the right hand side goes to infinity as $|F| \to \infty$, we see there only finitely many finite fields $F$ such that the Kloosterman-Salem 
number of $\Mat_2(F)$ lies below any given threshold.
Together with Proposition~\ref{pro: simplecase}, this proves Proposition \ref{pro: matrix}.

\smallskip

\subsection{The semisimple case}

We will now extend the results of the previous sections to show that all but finitely many \emph{semisimple}\/ rings with no $\mathbb{F}_2=\Mat_1(\mathbb{F}_2)$ factors and Kloosterman-Salem number below a given threshold are fields.
\begin{proposition}
\label{pro: semisimple}
  For any fixed $0<\alpha<\infty$, all but finitely many finite semisimple rings $R$ with $C_R\leq\alpha$ and no $\mathbb{F}_2$-factors are finite fields. In 
  particular, for any fixed $0 < \alpha < \infty$, all but finitely many finite, odd order, semisimple rings $R$ with $C_R \leq \alpha$ are finite fields.
\end{proposition}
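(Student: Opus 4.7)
The plan is to combine the simple-factor bounds from Propositions \ref{pro: simplecase} and \ref{pro: matrix} with a multiplicative bound on $C_R$ in terms of the Kloosterman-Salem constants of the individual simple factors. Write $R=R_1\times\cdots\times R_k$ with each $R_j=\Mat_{n_j}(F_j)$. Then $H_R$ coincides with $H_{R_1}\times\cdots\times H_{R_k}$, and additive characters on $R^2$ factor across the simple factors. A direct calculation gives the Fourier factorization
\[
\widehat{H}_R(m^{(1)},\ldots,m^{(k)}) \;=\; \prod_{j=1}^{k} \widehat{H}_{R_j}(m^{(j)}), \qquad m^{(j)}\in R_j^2.
\]

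Fix an index $j$ and evaluate at $m$ with $m^{(j)}\neq 0$ but $m^{(i)}=0$ for $i\neq j$. Using $\widehat{H}_{R_i}(0)=|R_i^*|/|R_i|^2$ and the definition of the Kloosterman-Salem number, careful bookkeeping with the normalization $|R|^2/\sqrt{|R^*|}$ yields the master inequality
\[
C_R \;\geq\; C_{R_j}\prod_{i\neq j}\sqrt{|R_i^*|}.
\]
Assume $C_R\leq\alpha$ and that $R$ has no $\mathbb{F}_2$ factor. Every simple factor then has $|R_i^*|\geq 2$: a field factor $\mathbb{F}_q$ with $q\geq 3$ gives $|R_i^*|=q-1\geq 2$, and a matrix factor $\Mat_{n_i}(F_i)$ with $n_i\geq 2$ gives $|GL_{n_i}(F_i)|\geq 6$. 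Using $C_{R_j}\geq 1$ and the master inequality with any single index, we obtain $\alpha\geq 2^{(k-1)/2}$, so the number of semisimple factors is bounded: $k\leq 1+2\log_2\alpha$.

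The case $k=1$ is Proposition \ref{pro: matrix}, so assume $k\geq 2$ and order the factors so that $|R_1|\leq\cdots\leq|R_k|$. Applying the master inequality with $j=1$, keeping only the factor $\sqrt{|R_k^*|}$, and invoking Lemma \ref{lem: phi lower} gives
\[
C_R \;\geq\; \sqrt{|R_k^*|} \;\geq\; \sqrt{|R_k|\,\phi(n_k,|F_k|)} \;\geq\; \tfrac{1}{2}\sqrt{|R_k|},
\]
so $|R_k|\leq 4\alpha^2$. Combined with $k\leq 1+2\log_2\alpha$, this forces $|R|\leq |R_k|^k\leq (4\alpha^2)^{1+2\log_2\alpha}$, a bound depending only on $\alpha$. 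Hence only finitely many such semisimple $R$ exist. The odd-order version is immediate, since $\mathbb{F}_2$ has even order and cannot appear as a semisimple factor.

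The principal obstacle is a clean derivation of the master inequality: the Fourier factorization $\widehat{H}_R=\prod\widehat{H}_{R_j}$ is conceptually transparent, but one must track the normalizations through the self-dual identification of $R^2$ so that no factor of $\sqrt{|R_i^*|}$ is lost in the comparison with the definition of $C_{R_j}$. Once that inequality is established, the remainder of the argument reduces to structural bookkeeping with the classification of finite semisimple rings.
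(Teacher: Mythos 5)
Your proof is correct and takes essentially the same approach as the paper. The paper's proof of Proposition~\ref{pro: semisimple} uses the two-factor product formula (Proposition~\ref{pro: directproducts}) applied to $R=\Mat_{n_1}(F_1)\times R_2$, bounds $|GL_{n_1}(F_1)|$ and $|R_2^*|$ by $2C_R^2$ via Proposition~\ref{pro: lowerboundKS}, and then bounds $k$ using $|GL_{n_j}(F_j)|\geq 2$ when there are no $\F_2$-factors; your ``master inequality'' $C_R\geq C_{R_j}\prod_{i\neq j}\sqrt{|R_i^*|}$ is exactly the $k$-fold version of that product formula, and your derivation of it from the Fourier factorization is sound. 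The one small thing to flag: you invoke $C_{R_j}\geq 1$, but at this stage of the paper the only established general lower bound is $C_{R_j}>\sqrt{1/2}$ from Proposition~\ref{pro: lowerboundKS}; the sharper bound $C_R\geq 1$ is Proposition~\ref{pro: lowerboundtwo}, which appears later (though its proof does not depend on \ref{pro: semisimple}, so there is no circularity). Substituting $\sqrt{1/2}$ for $1$ in your argument only changes the numerical constants, so the conclusion is unaffected.
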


To prove Proposition \ref{pro: semisimple}, we need two lemmas.
Firstly we establish a useful general lower bound on the Kloosterman-Salem number of a finite ring.

\begin{proposition} Let $R$ be a finite ring with $1 \neq 0$ and let $C$ be its Kloosterman-Salem number. 
Then
$$
 \sqrt{\frac{1}{2}} < \sqrt{1 - \frac{|R^*|}{|R|^2}} \leq C \leq \sqrt{|R^*|}.
$$
Thus no finite ring has Kloosterman-Salem number $C \leq \sqrt{\frac{1}{2}}$.
\label{pro: lowerboundKS}
\end{proposition}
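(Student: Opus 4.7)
The plan is to treat the three inequalities in the chain separately; only the middle one requires any real argument. For the upper bound $C \leq \sqrt{|R^*|}$, I would apply the triangle inequality directly to the explicit formula
\[
\hat H(m) \;=\; \frac{1}{|R|^2}\sum_{x\in R^*}\chi_{m_1}(-x)\chi_{m_2}(-x^{-1})
\]
derived in the basic setup. Since each summand has modulus $1$, one gets $|\hat H(m)| \leq |R^*|/|R|^2 = \sqrt{|R^*|}\cdot |R|^{-2}|R^*|^{1/2}$ for every $m$, so $C = \sqrt{|R^*|}$ is admissible in the definition of $C_R$.

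For the middle inequality I would use Plancherel. With the Fourier normalization fixed in Section~\ref{section: proof},
\[
\sum_m |\hat H(m)|^2 \;=\; \frac{|H|}{|R|^2} \;=\; \frac{|R^*|}{|R|^2}.
\]
Isolating the zero-frequency term $|\hat H(0)|^2 = |R^*|^2/|R|^4$ then gives
\[
\sum_{m\neq 0}|\hat H(m)|^2 \;=\; \frac{|R^*|\bigl(|R|^2 - |R^*|\bigr)}{|R|^4}.
\]
Bounding the left-hand side above by $(|R|^2 - 1)\max_{m\neq 0}|\hat H(m)|^2$ and using the displayed characterization $\max_{m\neq 0}|\hat H(m)| = C|R|^{-2}|R^*|^{1/2}$, I solve for $C^2$ to obtain
\[
C^2 \;\geq\; \frac{|R|^2 - |R^*|}{|R|^2 - 1} \;\geq\; 1 - \frac{|R^*|}{|R|^2},
\]
which is the middle inequality.

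Finally, the strict inequality $\sqrt{1/2} < \sqrt{1 - |R^*|/|R|^2}$ is equivalent to $|R^*| < |R|^2/2$. Since $0$ is never a unit when $1\neq 0$, we have $|R^*|\leq |R|-1$, and $1\neq 0$ forces $|R|\geq 2$; the resulting inequality $(|R|-1)/|R|^2 < 1/2$ is equivalent to $|R|^2 - 2|R| + 2 > 0$, which holds for all real $|R|$. There is no serious obstacle here; the only point that needs care is keeping the Fourier normalization consistent so that Plancherel produces the correct constant $|R^*|/|R|^2$ on the right, after which the proof reduces to the elementary fact that a maximum dominates an average.
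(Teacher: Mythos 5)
Your argument is correct and follows essentially the same route as the paper: Plancherel, isolating the zero frequency, bounding the remaining sum by $(|R|^2-1)$ times the maximum, and the trivial triangle-inequality bound for the upper estimate. The only cosmetic difference is that you retain the $|R|^2-1$ factor a step longer before weakening to $1-|R^*|/|R|^2$, which changes nothing in the end.
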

\begin{proof}
Let $H$ be the characteristic function of the hyperbola $\{ (x,y) \in R^2 | xy=1 \}$. Plancherel's Theorem gives
$$
|R|^2 \sum_{m \in R^2} |\hat{H}(m)|^2 = \sum_{x \in R^2} |H(x)|^2 = |H|=|R^*|.
$$
Thus
$$
|R|^2 \left(\frac{|R^*|^2}{|R|^4} + \sum_{m \neq 0} |\hat{H}(m)|^2 \right) = |R^*|
$$
so
$$
\frac{|R^*|}{|R|^2} - \frac{|R^*|^2}{|R|^4} \leq C^2 |R|^{-4} |R^*| (|R|^2 - 1) \leq C^2 |R|^{-2} |R^*|
$$
and hence
$$
C \geq \sqrt{1 - \frac{|R^*|}{|R|^2}} > \sqrt{1 - \frac{1}{|R|}} \geq \sqrt{\frac{1}{2}}. 
$$
As $|R|^2 \hat{H}(m)$ is the sum of $|R^*|$ terms of modulus one, it is clear $|\hat{H}(m)| \leq |R|^{-2} |R^*|$ which yields $C_R \leq \sqrt{|R^*|}$.

\end{proof}

We will see later that the lower bound for $C$ in Proposition~\ref{pro: lowerboundKS} can be strengthened to $1$ while the upper bound is sharp in general.

We will also need a formula for the Kloosterman-Salem number of a direct product of rings.
\begin{proposition} 
Let $R = R_1 \times R_2$ be a direct product of finite rings, then their Kloosterman-Salem numbers are related by
\[
C_R = \max(C_1|R_2^*|^{\frac{1}{2}}, |R_1^*|^{\frac{1}{2}}C_2),
\]
or equivalently
$$
\frac{C_R}{\sqrt{|R^*|}} = \max \left(\frac{C_1}{\sqrt{|R_1^*|}}, \frac{C_2}{\sqrt{|R_2^*|}}\right).
$$
\label{pro: directproducts} 
\end{proposition}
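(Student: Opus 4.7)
The plan is to exploit that $H$ factors as a product of the component hyperbolas under the natural identification, so the Fourier transform factors multiplicatively; then the maximum defining $C_R$ splits into three easily-evaluated pieces.

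First I would set up the identification $(R_1\times R_2)^2\cong R_1^2\times R_2^2$ sending $((x_1,x_2),(y_1,y_2))$ to $((x_1,y_1),(x_2,y_2))$. Since multiplication in $R_1\times R_2$ is componentwise, under this identification the hyperbola $H\subset R^2$ corresponds exactly to $H_1\times H_2$, where $H_i\subset R_i^2$ is the hyperbola of $R_i$. Correspondingly, the Pontryagin dual of $R^2$ splits as the product of the duals of $R_1^2$ and $R_2^2$, and a character indexed by $m=(m_1,m_2)$ factors as $\chi_m(x,y)=\chi_{m_1}(x_1,y_1)\chi_{m_2}(x_2,y_2)$. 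Plugging into the defining sum and using the normalization $|R|^2=|R_1|^2|R_2|^2$, one gets
\[
\hat H(m_1,m_2)=\hat H_1(m_1)\,\hat H_2(m_2).
\]

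Next I would analyze $\max_{m\neq 0}|\hat H(m)|$ by splitting the index set $R^2\setminus\{0\}$ into three disjoint pieces: (a) $m_1\neq 0, m_2=0$; (b) $m_1=0, m_2\neq 0$; (c) $m_1\neq 0, m_2\neq 0$. Using $\hat H_i(0)=|R_i^*|/|R_i|^2$ together with the definition $\max_{m_i\neq 0}|\hat H_i(m_i)|=C_i|R_i^*|^{1/2}/|R_i|^2$, the three pieces contribute at most
\[
\frac{|R_1^*|}{|R_1|^2}\cdot\frac{C_2|R_2^*|^{1/2}}{|R_2|^2},\quad \frac{C_1|R_1^*|^{1/2}}{|R_1|^2}\cdot\frac{|R_2^*|}{|R_2|^2},\quad \frac{C_1|R_1^*|^{1/2}}{|R_1|^2}\cdot\frac{C_2|R_2^*|^{1/2}}{|R_2|^2}
\]
respectively. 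Multiplying each by $|R|^2/\sqrt{|R^*|}=|R_1|^2|R_2|^2/\sqrt{|R_1^*||R_2^*|}$ yields the three candidate values $C_2\sqrt{|R_1^*|}$, $C_1\sqrt{|R_2^*|}$, and $C_1 C_2$ for $C_R$. The first two cases are actually attained (taking $m_i$ to be the maximizer for $C_i$ and the other coordinate zero), so the max is at least $\max(C_1\sqrt{|R_2^*|},\,C_2\sqrt{|R_1^*|})$.

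Finally, to eliminate the mixed term I invoke the upper bound $C_i\leq\sqrt{|R_i^*|}$ from Proposition \ref{pro: lowerboundKS}, which gives $C_1C_2\leq C_1\sqrt{|R_2^*|}$ (and also $\leq C_2\sqrt{|R_1^*|}$), so the third candidate is dominated by the first two. Putting this together yields $C_R=\max(C_1\sqrt{|R_2^*|},\,C_2\sqrt{|R_1^*|})$, and dividing by $\sqrt{|R^*|}=\sqrt{|R_1^*||R_2^*|}$ gives the equivalent form. I do not expect any serious obstacle: the only subtlety is the bookkeeping of the Fourier normalizations and verifying that the bounds on pieces (a) and (b) are actually attained, so that the claimed formula is an equality rather than merely an upper bound.
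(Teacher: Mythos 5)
Your proof is correct and follows essentially the same route as the paper: factor $\hat H(m_1,m_2)=\hat H_1(m_1)\hat H_2(m_2)$, split $m\neq 0$ into the three cases, and use $C_i\leq\sqrt{|R_i^*|}$ to discard the mixed term $C_1C_2$. (The only slip is cosmetic: the displayed list of three bounds is written in the order (b), (a), (c) while your case labels read (a), (b), (c).)
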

Note that because we require our rings to have a unit, and because the Kloosterman-Salem number is not defined when the dual group has no non-zero elements, this theorem only applies to non-trivial direct product decompositions.
\begin{proof}
First note that $R^* = R_1^* \times R_2^*$. 
As $R = R_1 \times R_2$ is also a decomposition of the underlying Abelian groups, the (irreducible) characters of $R$ are products of characters of $R_1$ and $R_2$.
The Hyperbola $H \subseteq R^2$ also decomposes as $H=H_1 \times H_2$ under the decomposition $R^2=R_1^2 \times R_2^2$.
Furthermore for any $m=(m_1,m_2) \in R_1^2 \times R_2^2=R^2$ it is easy to see that 
\begin{eqnarray*}
\hat{H}(m_1,m_2) &=& \frac{1}{|R|^2} \sum_{(x_1,x_2) \in R_1^* \times R_2^*} \chi_{m_1} \left(-\left(x_1,\frac{1}{x_1}\right)\right)\chi_{m_2}\left(-\left(x_2,\frac{1}{x_2}\right)\right) \\ &=&
\hat{H_1}(m_1) \hat{H_2}(m_2).
\end{eqnarray*}
The maximum of $|\hat{H_j}(m_j)|$ as $m_j$ varies over nonzero elements is by definition 
$$
C_{R_j} |R_j|^{-2}|R^*_j|^{\frac{1}{2}}
$$
while the value of  $|\hat{H_j}(0)|$ is $|R_j|^{-2}|R_j^*|$ for $j=1,2$.
Thus it is easy to calculate 
$$ \frac{|R|^2}{\sqrt{|R^*|}}\max_{(m_1,m_2) \neq (0,0)} |\hat{H}(m_1,m_2)| = 
\max(C_{R_1}C_{R_2}, C_{R_1}|R_2^*|^{\frac{1}{2}}, C_{R_2}|R_1^*|^{\frac{1}{2}} )$$
as claimed by considering the three cases $(m_1,m_2)$ both nonzero, $m_1=0$ and $m_2=0$. Using the trivial bound $C_{R_j} \leq \sqrt{|R_j^*|}$ 
shows that the maximum is one of the last two terms.
\end{proof}
Proposition~\ref{pro: directproducts} lets us construct examples to show that $\mathbb{Z}/2\mathbb{Z}$ factors have a limited effect on Kloosterman-Salem numbers and explains why we have to restrict to rings without these factors in this section.

\begin{example}[Boolean rings]
\label{ex: Boolean}
A finite Boolean ring is a direct product of finitely many $\mathbb{Z}/2\mathbb{Z}$'s.
Let $R_n= \mathbb{Z}/2\mathbb{Z} \times \dots \times \mathbb{Z}/2\mathbb{Z}$ be 
the Boolean ring of order $2^n$, which can be identified with the ring of $\mathbb{F}_2$-valued functions on a set of size $n$ under the usual operations of function addition and multiplication.
These rings have Kloosterman-Salem number $C=1$ independent of $n$ and hence give a sequence of rings $R_n$ with $|R_n| \to \infty$ such the Kloosterman-Salem number is uniformly bounded by $1$. 

To prove that $C=1$ for all finite Boolean rings first note that $C_{\mathbb{Z}/2\mathbb{Z}}=1$ by noting that  
the hyperbola consists of a single point $\{1,1\}$ and performing a quick calculation of $\hat{H}(m,n)$. We then use induction and the fact that $R_n = \mathbb{Z}/2\mathbb{Z} \times R_{n-1}$ in Proposition~\ref{pro: directproducts} to find 
\[
C_{R_n} = \max \left(1 \times 1, 1 \times \sqrt{|R_{n-1}^*|}, 1 \times \sqrt{|R_1^*|}\right)=1,
\] as all Boolean rings only have one unit, the vector $(1,1, \dots, 1)$. 

It is also easy to see that Boolean rings are exactly the rings with only one unit. To see this first note, that if a ring has exactly one unit then the Jacobson radical $J$ has $J=0$ 
as $|R^*|=|J||(R/J)^*| \geq |J|$. Thus $R$ is semisimple. By the Chinese remainder theorem, the simple matrix factors of $R$ must then also have only one unit.
It is easy then to see that they must be $\Mat_1(\mathbb{F}_2)=\mathbb{Z}/2\mathbb{Z}$ and so $R \cong \mathbb{Z}/2\mathbb{Z} \times \dots \times \mathbb{Z}/2\mathbb{Z}$ is a Boolean ring. 
\end{example}

\begin{example}[Twisting any ring by Boolean rings]
\label{ex: Boolean Twist}
Let $R$ be any finite ring, then let $S_n = \mathbb{Z}/2\mathbb{Z} \times \dots \times \mathbb{Z}/2\mathbb{Z} \times R$ be the direct product of 
$R$ with the Boolean ring of order $2^n$.

The product formula readily shows that 
$$C_{S_n} = \max \left( C_R, \sqrt{|R^*|}\right)=\sqrt{|R^*|}$$ is independent of $n$. Thus the sequence of rings $\{ S_n \}_{n=1}^{\infty}$ has $|S_n| \to \infty$ and uniformly bounded Kloosterman-Salem number. Despite this, these rings exhibit the worst square root law in the sense that $C_{S_n}=\sqrt{|R^*|}=\sqrt{|S_n^*|}$ achieves the general upper bound on the Kloosterman-Salem number given in Proposition~\ref{pro: lowerboundKS}.
\end{example}

Now we proceed with the proof of Proposition \ref{pro: semisimple}.
\begin{proof}[Proof of Proposition \ref{pro: semisimple}]
  Let $R$ be a finite semisimple ring as in equation \eqref{eq: semisimple} with no $\mathbb{Z}/2\mathbb{Z}$ factors.

If $R$ has only one semisimple factor, then $R$ is simple, so by Propositions \ref{pro: simplecase} and \ref{pro: matrix} all but finitely many such $R$ are fields.

Now suppose that $R$ has at least two semisimple factors, so that $R=\Mat_{n_1}(F_1)\times R_2$, where $R_2$ is a semisimple ring (note $|R_2| \geq 2$ as 
$1 \neq 0$ in our rings).
Let $C_1$ and $C_2$ denote the Kloosterman-Salem numbers of $\Mat_{n_1}(F_1)$ and $R_2$, respectively.
By Proposition \ref{pro: directproducts}, we have
\[
C_1|R_2^*|^{1/2}\leq C_R\qquad\mbox{and}\qquad C_2|GL_{n_1}(F_1)|^{1/2}\leq C_R.
\]
Bounding $C_1$ and $C_2$ below by Proposition \ref{pro: lowerboundKS} yields upper bounds for $|GL_{n_1}(F_1)|$ and $|R_2^*|$:
\[
|GL_{n_1}(F_1)|, |R_2^*|\leq 2C_R^2.
\]
If $R$ is a product of $k$ matrix rings, as in \eqref{eq: semisimple}, the previous equation implies that
\[
|GL_{n_2}(F_2)|\cdots|GL_{n_k}(F_k)|\leq 2C_R^2.
\]
This implies that
\[
\frac{1}{4} |F_j|^{n_j^2}\leq |GL_{n_j}(F_j)|\leq 2C_R^2
\]
for $j=1,\ldots,k$, so $n_j$ and $|F_j|$ are bounded for all $j$.
Further, as $F_j\not=\mathbb{Z}/2\mathbb{Z}$, we have $|GL_{n_j}(F_j)|\geq 2$, hence
\[
2^{k-1}\leq|GL_{n_2}(F_2)|\cdots|GL_{n_k}(F_k)|\leq 2C_R^2,
\]
which shows that $k$ is bounded.
As the size and number of $R$'s semisimple factors are bounded in terms of $C_R$, it follows that $|R|$ is bounded in terms of $C_R$.

It follows that there are finitely many semisimple $R$ with no $\mathbb{Z}/2\mathbb{Z}$ factors and with more than one semisimple factor and Kloosterman-Salem number $C_R\leq \alpha$, which concludes the proof.
\end{proof}
\begin{remark}
The same proof shows that all but finitely many semi-simple rings $R$ with $C_R\leq\alpha$ and a \emph{bounded number $\F_2$ factors}\/ (say $\leq n$ such factors) are fields.
\end{remark}

\subsection{Finite rings with Jacobson radical}

In this section we show that at most finitely many (odd) rings with Kloosterman-Salem number less than $\alpha>0$ have a non-zero Jacobson radical.

Firstly, we show that a ring with a non-zero Jacobson radical has a larger Kloosterman-Salem number than its semisimple part.
\begin{lemma}
\label{lem: pullback}
  Let $R$ be a finite ring with Jacobson radical $J$, and let $S=R/J$ be the semisimple part of $R$, so that we have a short exact sequence of rings and ideals
\[
0 \to J \to R \to S \to 0.
\]
If $C_R$ and $C_S=C_{R/J}$ denote the Kloosterman-Salem numbers of $R$ and $S$, then
\[
C_R\geq C_S|J|^{1/2}=C_{R/J}|J|^{1/2}.
\]
\end{lemma}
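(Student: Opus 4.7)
The plan is to transfer a near-extremal Fourier coefficient on $S$ up to $R$ via the quotient map $\pi:R\to S$, and the gain factor $|J|^{1/2}$ will fall out by comparing the normalization constants on both sides. First I would record the relevant facts about $\pi$: it is a surjective ring homomorphism, it maps $R^*$ onto $S^*$ with every fiber a coset of $1+J$ of size $|J|$ (so $|R^*|=|J|\,|S^*|$, as the paper already observed), and it preserves multiplicative inverses, $\pi(x^{-1})=\pi(x)^{-1}$ for $x\in R^*$.

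Next I would pull back characters. The surjection $\pi:R\to S$ of additive groups induces an injection $\hat{S}\hookrightarrow\hat{R}$ whose image is exactly the characters killed by $J$; under the chosen identifications of $R$ and $S$ with their Pontryagin duals, this produces, for every nonzero $m'=(m'_1,m'_2)\in S^2$, a nonzero $m=(m_1,m_2)\in R^2$ with $\chi_{m_i}(r)=\chi_{m'_i}(\pi(r))$ for all $r\in R$. The absolute values of Fourier coefficients do not depend on these identifications, so the non-canonicity is harmless. The core calculation is then to partition the defining sum for $\hat{H}_R(m)$ according to the value of $\pi(x)\in S^*$:
\[
|R|^2\,\hat{H}_R(m)=\sum_{x\in R^*}\chi_{m'_1}(-\pi(x))\,\chi_{m'_2}(-\pi(x)^{-1})=|J|\sum_{s\in S^*}\chi_{m'_1}(-s)\,\chi_{m'_2}(-s^{-1})=|J|\,|S|^2\,\hat{H}_S(m').
\]
Since $|R|=|J|\,|S|$, this simplifies to $|\hat{H}_R(m)|=|J|^{-1}|\hat{H}_S(m')|$.

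To finish, I would pick $m'\neq 0$ realizing the maximum for $S$, so that $|\hat{H}_S(m')|=C_S|S|^{-2}|S^*|^{1/2}$. The corresponding $m\neq 0$ in $R^2$ must obey $|\hat{H}_R(m)|\leq C_R|R|^{-2}|R^*|^{1/2}$ by definition of $C_R$. Substituting $|R|=|J|\,|S|$ and $|R^*|=|J|\,|S^*|$ into
\[
\frac{C_S|S^*|^{1/2}}{|J|\,|S|^2}\;\leq\;\frac{C_R|R^*|^{1/2}}{|R|^2}
\]
and cancelling immediately gives $C_R\geq C_S|J|^{1/2}$, as claimed. The only real subtlety is the bookkeeping with the character identifications needed to guarantee $m\neq 0$; once the pullback map $\hat{S}\hookrightarrow\hat{R}$ is in hand, the rest is a direct computation, and I would not expect any genuine obstacle.
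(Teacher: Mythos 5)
Your proposal is correct and follows essentially the same route as the paper: pull back additive characters of $S=R/J$ to characters of $R$ that vanish on $J$, use the fact that $\pi$ restricted to units is $|J|$-to-one onto $S^*$ to collapse the Kloosterman sum, and then compare normalizations using $|R|=|J|\,|S|$ and $|R^*|=|J|\,|S^*|$. The only cosmetic difference is that you single out a maximizing $m'\neq 0$ and transfer it, whereas the paper takes a supremum over all characters induced from $S$ before comparing to the definition of $C_R$; these are the same calculation.
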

\begin{proof}
Let $\chi_m, \chi_n$ be any additive characters of $S$ (at least one of them nontrivial), pulling them back under the quotient map $\pi: R \to S$, one can view them as additive characters of $R$ which are equal to $1$ on $J$. Using these characters one obtains certain Kloosterman sums
$$
\hat{H}(m,n) = \frac{1}{|R|^2} \sum_{x \in R^*} \chi_m(-x)\chi_n(-\frac{1}{x}),
$$
which represent certain Fourier coefficients for the hyperbola of $R$.
As $\chi_m(-x)$ and $\chi_n(-\frac{1}{x})$ only depend on the image of $x, \frac{1}{x}$ in $S$, 
using that $R^*=\pi^{-1}(S^*)$, this sum degenerates into
$$
\hat{H}(m, n) = \frac{|J|}{|R|^2} \sum_{x \in S^*} \chi_m(-x)\chi_n(-\frac{1}{x}).
$$
Taking the maximum over $(m,n) \neq (0,0) \in S \times S$ one gets 
$$
\max_{(m,n) \in S \times S} |\hat{H}(m,n)| = \frac{|J|}{|R|^2} C_S |S^*|^{\frac{1}{2}}
$$
and so
$$
C_R |R|^{-2}|R^*|^{\frac{1}{2}} \geq \frac{|J|}{|R|^2} C_S |S^*|^{\frac{1}{2}}.
$$
(Note this last inequality is not necessarily an equality as the previous maximum was only over characters of $R$ induced from $S$ and not all characters of $R$.) Using that $|R^*|=|S^*||J|$ this simplifies to give
$$ C_R\geq C_S|J|^{1/2}=C_{R/J}|J|^{1/2}, $$  as was to be shown.
\end{proof}

We are now ready to show that there are finitely many finite rings with no $\mathbb{Z}/2\mathbb{Z}$ semisimple factors that have a non-zero Jacobson radical and a Kloosterman-Salem number below a given threshold. This is the last step in the proof of the main theorem.
\begin{proof}[Proof of $J\not=0$ case]
Let $R$ be a finite ring with Kloosterman-Salem number $C_R$ bounded above by some threshold $\alpha \in (0, \infty)$; further, assume that $R$ has no $\mathbb{Z}/2\mathbb{Z}$ semisimple facters.

It follows from Lemma \ref{lem: pullback} and Proposition~\ref{pro: lowerboundtwo} that $|J|\leq \alpha^2$ and $C_{R/J}\leq\alpha$.
As $R/J$ is semisimple and $C_{R/J}$ is bounded by $\alpha$, Proposition \ref{pro: semisimple} implies that $R/J$ must be a field in all but finitely many cases.
Thus we may assume that we have a short exact sequence
\[
0\to J\to R\to F\to 0,
\]
where $F$ is a finite field.
It remains to be shown that the Jacobson radical is zero in all but finitely many cases.

If $J \neq 0$, then $J/J^2 \neq 0$ by Nakayama's lemma.
As $J/J^2$ is a non-zero $R/J=F$-vector space, it follows that $|F|$ divides $|J|$.
Since $|J|$ is bounded by $\alpha^2$, it follows that $|F|$ is also bounded by $\alpha^2$, hence $|R|$ is bounded by $\alpha^4$, which implies that only a finite number of rings satisfy these conditions.  
\end{proof}

Thus we have established Theorem \label{thm: mainhyperbola} and in view of the above, Theorem \ref{hyperbola} follows. 

\smallskip

\section{Quantitative Results}

The proofs of Propositions \ref{pro: simplecase} and \ref{pro: matrix} yield explicit bounds which we record in the following corollary.
\begin{corollary}
\label{cor: explicitbounds}
Let $F$ be a finite field, let $R$ be the finite simple ring $\Mat_n(F)$, and let $C_R$ be the Kloosterman-Salem number of $R$.

If $n=2$,
\begin{equation}
  \label{eq:3}
  C_R\geq |F|-1+\frac 1{|F|}.
\end{equation}
If $n\geq 3$,
\begin{equation}
  \label{eq:4}
  C_R\geq\frac 12 |F|^{\frac{n(n-2)}2}.
\end{equation}
In particular, $C_R\geq\sqrt 2$ for all $F$ and all $n$.
In addition, we have
\begin{equation}
  \label{eq:5}
  n \leq \sqrt{2\log_2(C_R)}+2.
\end{equation}
\end{corollary}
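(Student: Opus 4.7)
The plan is to extract these bounds directly from the two prior arguments and then invert \eqref{eq:4} to get \eqref{eq:5}. For \eqref{eq:4} ($n\geq 3$), I revisit the proof of Proposition~\ref{pro: simplecase}: applying Theorem~\ref{thm: idealbound} to the maximal left ideal $M\subset \Mat_n(F)$ of matrices with zero last column (of size $|M|=|F|^{n^2-n}$) and using Lemma~\ref{lem: phi lower} to bound $\phi(n,|F|)\geq 1/4$, I obtain $\tfrac{1}{4}|F|^{n(n-2)}\leq C_R^2$, so $C_R\geq \tfrac{1}{2}|F|^{n(n-2)/2}$. For \eqref{eq:3} ($n=2$), I simply read off the explicit expression
\[
C_R\geq\frac{|F|-1+\frac{1}{|F|}}{\sqrt{\left(1-\frac{1}{|F|}\right)\left(1-\frac{1}{|F|^2}\right)}}
\]
obtained in Section~\ref{matrix kloosterman} by explicitly evaluating $\hat H$ at the rank-one pair $(\mathrm{diag}(1,0),\mathrm{diag}(0,-1))$. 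Since each factor under the square root is at most $1$, the denominator is at most $1$ and may be dropped, yielding $C_R\geq |F|-1+\tfrac{1}{|F|}$.

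The uniform lower bound $C_R\geq\sqrt 2$ then follows by checking the worst cases of \eqref{eq:3} and \eqref{eq:4}. In the $n=2$ regime, even $|F|=2$ gives $C_R\geq 2-1+\tfrac12=\tfrac32>\sqrt 2$, and the right-hand side of \eqref{eq:3} is increasing in $|F|$. In the $n\geq 3$ regime, the smallest case is $n=3$, $|F|=2$, giving $C_R\geq \tfrac12\cdot 2^{3/2}=\sqrt 2$; enlarging either $n$ or $|F|$ only increases the bound.

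For \eqref{eq:5}, I invert \eqref{eq:4}. Using $|F|\geq 2$, \eqref{eq:4} gives $C_R\geq 2^{n(n-2)/2-1}$, so
\[
n(n-2)\leq 2+2\log_2 C_R.
\]
The key algebraic identity is $n(n-2)-(n-2)^2=2(n-2)$, which for $n\geq 3$ yields $(n-2)^2+2\leq n(n-2)$, and hence $(n-2)^2\leq 2\log_2 C_R$. Taking square roots gives $n\leq 2+\sqrt{2\log_2 C_R}$. The cases $n=1$ and $n=2$ are trivial: $n=1$ because $1\leq 2$, and $n=2$ because $(n-2)^2=0\leq 2\log_2 C_R$, which holds thanks to the already established $C_R\geq\sqrt 2\geq 1$.

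The only substantive obstacle is matching the precise constant in \eqref{eq:5}: a naive inversion of \eqref{eq:4} produces $n\leq 2+\sqrt{2+2\log_2 C_R}$, and recovering the cleaner form $n\leq 2+\sqrt{2\log_2 C_R}$ requires the small algebraic trick $(n-2)^2+2\leq n(n-2)$ for $n\geq 3$. Everything else is bookkeeping: reading \eqref{eq:3} and \eqref{eq:4} off the previous proofs, and verifying the $\sqrt 2$ threshold case $n=3$, $|F|=2$.
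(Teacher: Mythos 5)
Your proposal is correct and follows the same route as the paper: read \eqref{eq:3} off the $n=2$ computation in Section~\ref{matrix kloosterman} (dropping the denominator $\sqrt{\phi(2,|F|)}\le 1$), extract \eqref{eq:4} from the inequality $\tfrac14|F|^{n(n-2)}\le C_R^2$ in the proof of Proposition~\ref{pro: simplecase}, and obtain \eqref{eq:5} via $|F|\geq 2$ together with the same algebraic observation $(n-2)^2\le n(n-2)-2$ for $n\geq 3$. Your explicit check of the $\sqrt 2$ threshold at $(n,|F|)=(3,2)$ and the monotonicity remarks are a slight expansion of the paper's ``in particular,'' but the substance is identical.
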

\begin{proof}
If $n=2$, then by Proposition \ref{pro: matrix},
\[
C_R\geq \frac{|F|-1+\frac 1{|F|}}{\sqrt{\phi(2,F)}}\geq |F|-1+\frac 1{|F|},
\]
which proves \eqref{eq:3}.

Now suppose that $n\geq 3$.
In this case the proof of Proposition \ref{pro: simplecase} shows that
\begin{equation}
  \label{eq:1}
  \frac 14 |F|^{n(n-2)}\leq |F|^{n(n-2)}\phi(n,|F|)\leq C_R^2.
\end{equation}
Taking square roots proves \eqref{eq:4}.

To prove the bound \eqref{eq:5} on $n$, we combine trivial bound $2\leq |F|$ with \eqref{eq:1}:
\[
2^{n(n-2)-2}\leq C^2.
\]
For $n\geq 3$, we have $(n-2)^2\leq n(n-2)-2$, so
\[
2^{(n-2)^2}\leq C^2,
\]
which implies \eqref{eq:5} by taking logarithms.
For $n=2$, the upper bound is trivial and so the proof is complete.
\end{proof}

Using Lemma~\ref{lem: pullback} we can strengthen the general lower bound for Kloosterman-Salem numbers obtained in Proposition~\ref{pro: lowerboundKS}.
\begin{proposition}
\label{pro: lowerboundtwo}
Let $R$ be a finite ring with Kloosterman-Salem number $C_R$.
Then $C_R \geq 1$ with equality if and only if $R$ is a finite Boolean ring.
Any non-Boolean finite ring has $C_R \geq \sqrt{2}$.
\end{proposition}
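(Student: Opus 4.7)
My plan is to prove $C_R \geq \sqrt{2}$ for every non-Boolean $R$, which combined with Example \ref{ex: Boolean} (giving $C_R = 1$ for Boolean $R$) will yield both statements of the proposition. The key is a structural reduction via Lemma \ref{lem: pullback} to the semisimple case, then via Proposition \ref{pro: directproducts} to the simple case, with the simple matrix subcases handled by Corollary \ref{cor: explicitbounds} and the simple field subcase handled by a sharpened Plancherel bound plus direct analysis.

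First I would establish the auxiliary fact that $C_S \geq 1$ for every nontrivial finite semisimple ring $S$. For a field $\mathbb{F}_q$, the Fourier coefficients $\hat H(m,0)$ and $\hat H(0,n)$ with $m,n\neq 0$ all equal $-1/q^2$ by an elementary additive character computation; subtracting their $\ell^2$ contribution from the Plancherel identity (as in the proof of Proposition \ref{pro: lowerboundKS}) gives
\[
C_{\mathbb{F}_q}^2 \;\geq\; \frac{q^2-q-1}{(q-1)^2} \;=\; 1 + \frac{q-2}{(q-1)^2} \;\geq\; 1,
\]
with equality only at $q = 2$. For $\Mat_n(F)$ with $n\geq 2$, Corollary \ref{cor: explicitbounds} gives $C\geq\sqrt{2}\geq 1$. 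Since Proposition \ref{pro: directproducts} yields $C_{R_1\times R_2}\geq \max(C_{R_1},C_{R_2})$ (using $|R_j^*|\geq 1$), the auxiliary claim then extends to all nontrivial semisimple rings by induction on the number of simple factors.

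Next I would run the main case analysis on the Jacobson radical $J$ of $R$. If $J\neq 0$, then $|J|\geq 2$ and Lemma \ref{lem: pullback} together with the auxiliary claim give $C_R \geq C_{R/J}\sqrt{|J|} \geq 1\cdot\sqrt{2}$. If $J=0$ so $R$ is semisimple and non-Boolean, then either $R$ is simple non-Boolean, or $R = R_1\times R_2$ with some factor, say $R_1$, non-Boolean; in the latter case $|R_1^*|\geq 2$ and the product formula combined with $C_{R_2}\geq 1$ from the auxiliary claim yields $C_R \geq C_{R_2}\sqrt{|R_1^*|} \geq \sqrt{2}$. When $R$ is simple non-Boolean, either $R = \Mat_n(F)$ with $n\geq 2$ (for which Corollary \ref{cor: explicitbounds} gives $C_R\geq\sqrt{2}$) or $R = \mathbb{F}_q$ is a field with $q\geq 3$.

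The main obstacle is this last subcase $R = \mathbb{F}_q$ with $q\geq 3$, since Lemma \ref{lem: pullback} is vacuous when $J=0$ and the Plancherel refinement above only gives $C_{\mathbb{F}_q} > 1$, not $\sqrt{2}$. For $\mathbb{F}_3$ the Kloosterman sum degenerates because $x^{-1}=x$ for every $x\in\mathbb{F}_3^*$; a direct computation gives $\hat H(1,-1) = 2/9$, hence $C_{\mathbb{F}_3} = \sqrt{2}$ exactly (showing the bound is sharp). For $q\geq 4$, combining the Weil bound \eqref{eq: weil bound} with Katz's equidistribution theorem \cite{katz1988gauss} (which yields $C_{\mathbb{F}_q}\to 2$ as $q\to\infty$) handles large $q$, while the finitely many remaining intermediate fields are verified directly through explicit Kloosterman sum calculations.
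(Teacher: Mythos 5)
Your proposal follows essentially the same reduction strategy as the paper: pull back to the semisimple case via Lemma~\ref{lem: pullback}, reduce to simple factors via Proposition~\ref{pro: directproducts}, dispatch $\Mat_n(F)$ for $n\geq 2$ with Corollary~\ref{cor: explicitbounds}, and then confront the field case $\F_q$. Your Plancherel refinement, subtracting the $2(q-1)$ coefficients $\hat H(m,0)=\hat H(0,n)=-1/q^2$ to obtain $C_{\F_q}^2\geq (q^2-q-1)/(q-1)^2$, is correct and a nice elementary observation that upgrades the paper's Proposition~\ref{pro: lowerboundKS} bound of $\sqrt{1/2}$ to $1$ for fields; your direct computation that $\F_3$ achieves $C=\sqrt 2$ is also correct.

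The genuine gap is the case $\F_q$ with $q\geq 4$, and it is precisely the step you flag as ``the main obstacle.'' Your proposed fix does not close it. Katz's equidistribution theorem is an asymptotic statement: it yields $C_{\F_q}\to 2$, but gives no explicit $q_0$ beyond which $C_{\F_q}>\sqrt 2$, so ``finitely many remaining intermediate fields'' is a set whose size you cannot actually determine from what you have cited. The Weil bound \eqref{eq: weil bound} is an upper bound on $|\hat H|$ and cannot produce a lower bound on $C_{\F_q}$, so invoking it here does no work. To make your approach rigorous you would need either an effective form of Katz's theorem (a much stronger tool than the problem demands) or an explicit uniform lower bound. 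The paper uses exactly the latter: it cites the inequality
\[
C_{\F_q}^2\geq\frac{2q^3-3q^2-3q-1}{(q-1)(q^2-q-1)},
\]
attributed to Kloosterman's original work and given a modern proof in Kowalski's notes \cite{kowalskiexponential}; this is an explicit moment-type estimate that exceeds $2$ for every $q>3$, handling all fields at once rather than asymptotically. Replacing your Katz-plus-finite-check step with this inequality (or with an explicit fourth-moment computation of Kloosterman sums, which is where such a bound comes from) would repair the argument and bring it in line with the paper's proof.
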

\begin{proof}
Since $C_R \geq C_{R/J} |J|^{\frac{1}{2}}$, Lemma~\ref{lem: pullback} implies that it is enough to consider the semisimple case $J=0$.
The product formula in Proposition~\ref{pro: directproducts} reduces to the case where $R=\Mat_n(F)$.
By Corollary~\ref{cor: explicitbounds}, if $n\geq 2$ then $C_R\geq\sqrt 2$.

Thus we are reduced to the case $R=\Mat_1(F)=F$ a finite field.
Here by a result on Kloosterman sums, which we cite below, we have $C_F \geq \sqrt{2}$
for all finite fields besides $\F_2$.
A simple direct computation shows $C_{\F_2}=1$ as explained in Example~\ref{ex: Boolean}.

Furthermore it is easy to see from these arguments that $C_R=1$ if and only if $R$ is semisimple with all simple factors $\F_2$, a Boolean ring, and otherwise $C_R\geq\sqrt 2$.
\end{proof}
The lower bound $C_F\geq \sqrt 2$, where $F$ is a finite field, is implicit in the original work of Kloosterman \cite{kloosterman1927representation}.
A modern proof can be found on page 22 of \cite{kowalskiexponential}, where it is shown that if $|F|=q$, then
\begin{equation}
  \label{eq:2}
  C^2_F\geq\frac{2q^3-3q^2-3q-1}{(q-1)(q^2-q-1)}.
\end{equation}
For $q>3$, the right hand side of \eqref{eq:2} is greater than $2$, and one may easily check that $C_{\F_3}=\sqrt 2$.

\section{Extremal Rings}

In this section we study rings that have the worst possible square root law for Kloosterman sums.
We call such rings extremal:
\begin{definition}
A finite ring $R$ is \emph{extremal}\/ if its Kloosterman-Salem number $C_R$ achieves the general upper bound $C_R = \sqrt{|R^*|}$.
\end{definition}
Example \ref{ex: Boolean} shows that Boolean rings $\F_2^N$ are extremal, and example \ref{ex: Boolean Twist} shows that we can create extremal rings by taking products with Boolean rings.
It turns out that there are further examples, which we will partially classify.

We begin by providing an alternate characterization of extremal rings.
\begin{theorem}
\label{thm: extremal}
Let $R$ be a finite ring with Kloosterman-Salem number $C$. Then the following are equivalent:
\begin{enumerate}
\item $C < \sqrt{|R^*|}$.
\item $R$ is \emph{not}\/ an extremal ring.
\item The subset $\{ (x-1, x^{-1}-1):  x \in R^* \}$ of $R^2$ generates $R^2$ as an additive group.
\item For every $A, B \in R$, there exists a positive integer $n$ and units $x_1, \dots, x_n \in R^*$ such that
  \begin{align*}
    x_1 + \dots + x_n - n &= A \\
x_1^{-1} + \dots + x_n^{-1} - n &= B
  \end{align*}
\end{enumerate}
\end{theorem}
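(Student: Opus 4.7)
The plan is to prove the chain (1)~$\Leftrightarrow$~(2)~$\Leftrightarrow$~(3)~$\Leftrightarrow$~(4). The equivalence of (1) and (2) is immediate from the definition of ``extremal'' combined with the general upper bound $C_R \leq \sqrt{|R^*|}$ recorded in Proposition~\ref{pro: lowerboundKS}: we have $C_R < \sqrt{|R^*|}$ precisely when this upper bound fails to be attained.

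For the central equivalence (2)~$\Leftrightarrow$~(3), I would work directly with the Fourier formula
$$
\hat{H}(m_1,m_2) = \frac{1}{|R|^2} \sum_{x \in R^*} \chi_{m_1}(-x)\chi_{m_2}(-x^{-1}).
$$
Since each of the $|R^*|$ summands has modulus one, the triangle inequality yields $|R|^2 |\hat{H}(m_1,m_2)| \leq |R^*|$, with equality if and only if all summands are equal. The term corresponding to $x=1$ pins the common value down to $\chi_{m_1}(-1)\chi_{m_2}(-1)$, so after dividing through, the equality case is equivalent to the character $\chi_{(m_1,m_2)}$ being trivial on the set $S := \{(x-1,\, x^{-1}-1) : x \in R^*\} \subset R^2$. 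Therefore $R$ is extremal if and only if some nontrivial character of $R^2$ vanishes on $S$, which by Pontryagin duality on the finite abelian group $R^2$ is equivalent to $S$ failing to generate $R^2$ additively. This is exactly the negation of (3).

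Finally, (3)~$\Leftrightarrow$~(4) follows from the standard fact that in a finite abelian group $G$, the subgroup generated by a subset $T$ coincides with the additive sub-monoid generated by $T$: for any $s \in G$ with additive order $k$, one has $-s = (k-1)s$, which is a nonnegative integer multiple of $s$, so every $\mathbb{Z}$-linear combination of elements of $T$ can be rewritten as a $\mathbb{Z}_{\geq 0}$-linear combination. Applied to $T=S$ and $G=R^2$, condition (4) is precisely the statement that every $(A,B) \in R^2$ lies in this sub-monoid, while (3) is the statement that it lies in the corresponding subgroup, so the two conditions agree.

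The main obstacle is the equality-case analysis in the middle step: one must be careful to exploit the presence of $x=1 \in R^*$ to identify the common phase, recast the equality condition as the triviality of a character on the shifted set $S$, and then invoke Pontryagin duality. The remaining implications are formal consequences of finiteness of $R^2$.
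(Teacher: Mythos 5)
Your proof is correct and follows essentially the same route as the paper: (1)$\Leftrightarrow$(2) by definition together with the trivial upper bound, (2)$\Leftrightarrow$(3) by recognizing that extremality is the no-cancellation (equality) case of the triangle inequality, using the point $x=1$ (equivalently $(1,1)\in H$) to normalize, and invoking the duality between generating $R^2$ and no nontrivial character vanishing on the shifted hyperbola, and (3)$\Leftrightarrow$(4) by the finiteness argument that subgroups and sub-semigroups agree in a finite abelian group. The only cosmetic difference is that the paper phrases the middle step in terms of $H$ lying in a single coset of $\ker(\chi_m\otimes\chi_n)$ rather than in terms of the character being trivial on the translate $S=H-(1,1)$, but these are the same observation.
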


\begin{proof}
Note that (1) and (2) are equivalent by definition, and (3) and (4) are equivalent as $R \times R$ is finite and so a subset $S \subseteq R^2$ generates it as an additive group if and only if it generates it as a semigroup.
In this case this means any $(A,B) \in R^2$ is a finite sum of elements of the form $(x_i, x_i^{-1})$. 
Thus it remains to prove the equivalence of (2) and (3).

Suppose $R$ is an extremal ring, which means that $C_R = \sqrt{|R|^*}$.
This happens if and only if there exists $(m,n) \neq (0,0)$ such that
$$
|K(m,n)| = \left|\sum_{x \in R^*} \chi_m(x)\chi_n(x^{-1})\right|=|R^*|.
$$
That is, $K(m,n)$ has no cancellation.
As $|\chi_m(x)\chi_n(x^{-1})|=1$, this can only happen if $\chi_m(x)\chi_n(x^{-1})$ is constant for $x\in R^*$.

Since $\chi=\chi_m \otimes \chi_n : R \times R \to \C$ is a non-trivial additive character, its kernel $K$ is a proper subgroup of $R \times R$.
Elements in $R \times R$ have the same $\chi$-value if and only if they lie in the same coset of $K$, and so the Kloosterman sum $K(m,n)$ has no cancellation only if the hyperbola $H=\{ (x, x^{-1}) | x \in R^*\}$ lies in a single coset of $K$. 

Since $H-(1,1)$ is contained in $K-(1,1)$, it is clear that if $H-(1,1)$ cannot generate $R^2$ under addition.
Conversely, if $H-(1,1)$ generates a proper subgroup $K$ of $R^2$ then a pullback character under $\pi: R \times R \to (R \times R) /K$ yields a non-trivial additive character $\chi_m \otimes \chi_n$ of $R \times R$ for which $K(m,n)$ has no cancellation.
\end{proof}

\begin{corollary}
\label{cor: reducetosemisimple}
If $R$ is a finite ring with Jacobson radical $J$ then if the semisimple ring $R/J$ is extremal, this implies $R$ itself is extremal.
\end{corollary}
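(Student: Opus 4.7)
The plan is to prove this by combining Lemma \ref{lem: pullback} (the pullback bound relating $C_R$ to $C_{R/J}$) with the general upper bound $C_R\leq\sqrt{|R^*|}$ from Proposition \ref{pro: lowerboundKS}. Since the quotient map $\pi\colon R\to R/J$ induces a surjection $R^*\twoheadrightarrow(R/J)^*$ with fibers of size $|J|$, we have the identity $|R^*|=|J|\cdot|(R/J)^*|$, and this is the key numerical fact that will make the argument balance exactly.

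First, assume $R/J$ is extremal, i.e.\ $C_{R/J}=\sqrt{|(R/J)^*|}$. Applying Lemma \ref{lem: pullback} gives
\[
C_R\;\geq\;C_{R/J}\,|J|^{1/2}\;=\;\sqrt{|(R/J)^*|\cdot|J|}\;=\;\sqrt{|R^*|}.
\]
On the other hand, Proposition \ref{pro: lowerboundKS} gives the matching upper bound $C_R\leq\sqrt{|R^*|}$. Combining the two inequalities forces $C_R=\sqrt{|R^*|}$, so $R$ is extremal by definition. That is the whole proof — it is essentially one line once Lemma \ref{lem: pullback} and the unit-counting identity $|R^*|=|J|\cdot|(R/J)^*|$ are in hand.

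There is no real obstacle here; the statement is a direct corollary of the two cited results. If one preferred a more structural argument bypassing Lemma \ref{lem: pullback}, an alternative route would be via the generation criterion (3) in Theorem \ref{thm: extremal}: extremality of $R/J$ means that $\{(\bar x-1,\bar x^{-1}-1):\bar x\in(R/J)^*\}$ fails to additively generate $(R/J)^2$, so it sits in some proper subgroup $K'\subset(R/J)^2$. Then $(\pi\times\pi)^{-1}(K')$ is a proper subgroup of $R^2$ containing $\{(x-1,x^{-1}-1):x\in R^*\}$ (using $R^*=\pi^{-1}((R/J)^*)$), which shows $R$ is extremal. Either path is short; I would present the first, since Lemma \ref{lem: pullback} is already set up to deliver the conclusion immediately.
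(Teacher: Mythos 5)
Your primary argument is correct, but it differs from the paper's. The paper proves the corollary via criterion~(3) of Theorem~\ref{thm: extremal}: one observes that the hyperbola in $R^2$ maps onto the hyperbola in $(R/J)^2$ under the quotient, so if $H-(1,1)$ additively generates $R^2$ its image generates $(R/J)^2$; taking the contrapositive gives that $R/J$ extremal forces $R$ extremal. Your argument is instead quantitative: you combine the pullback inequality $C_R\geq C_{R/J}|J|^{1/2}$ from Lemma~\ref{lem: pullback} with the universal bound $C_R\leq\sqrt{|R^*|}$ from Proposition~\ref{pro: lowerboundKS} and the unit-count identity $|R^*|=|J|\,|(R/J)^*|$ to force $C_R=\sqrt{|R^*|}$ exactly. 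Both proofs are sound and about equally short. The paper's version is purely structural (it never touches the numerical value of $C_R$, only the generation criterion), while yours shows something slightly sharper as a byproduct — that the chain of inequalities in Lemma~\ref{lem: pullback} becomes an equality in the extremal case. Your proposed ``alternative route'' via the generation criterion and the proper subgroup $(\pi\times\pi)^{-1}(K')$ is in fact essentially the paper's own proof, stated directly rather than contrapositively, so you identified both available arguments.
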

\begin{proof}
First recall the hyperbola $H$ in $R \times R$ maps onto the hyperbola $\bar{H}$ in $R/J \times R/J$ under the quotient map.
Thus if $H$ generates $R \times R$ as an additive group, $\bar{H}$ will generate $R/J \times R/J$ as an additive group.
Thus by Theorem~\ref{thm: extremal}, $R$ not extremal implies $R/J$ is not extremal.
The result follows by taking the contrapositive.
\end{proof}

Corollary~\ref{cor: reducetosemisimple} reduces questions about extremal rings to questions about extremal semisimple rings.

\begin{corollary}
\label{cor: extremalproducts}
If $R = R_1 \times \dots \times R_n$ is a direct product of finite rings then $R$ is extremal if and only if at least one of the $R_i, 1 \leq i \leq n$ is extremal.
\end{corollary}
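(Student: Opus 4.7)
The plan is to reduce immediately to the two-factor case and then invoke Proposition~\ref{pro: directproducts}, after which a straightforward induction on $n$ finishes the argument.

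First I would handle $n=2$, writing $R = R_1 \times R_2$. Proposition~\ref{pro: directproducts} gives the clean identity
\[
\frac{C_R}{\sqrt{|R^*|}} = \max\!\left(\frac{C_{R_1}}{\sqrt{|R_1^*|}},\ \frac{C_{R_2}}{\sqrt{|R_2^*|}}\right),
\]
and by Proposition~\ref{pro: lowerboundKS} each of the ratios on the right is bounded above by $1$. Thus the left-hand side equals $1$ (i.e., $R$ is extremal) if and only if at least one of the two ratios on the right equals $1$, i.e., if and only if at least one of $R_1, R_2$ is extremal. This settles the base case.

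For general $n$ I would induct on the number of factors. Writing $R = R_1 \times R'$ with $R' = R_2 \times \dots \times R_n$, the $n=2$ case just established shows $R$ is extremal iff $R_1$ is extremal or $R'$ is extremal; the inductive hypothesis then says $R'$ is extremal iff one of $R_2, \ldots, R_n$ is, which combines to give the claim.

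I do not expect a significant obstacle here: all of the real work is already packaged into Proposition~\ref{pro: directproducts} and the upper bound $C_R \leq \sqrt{|R^*|}$ from Proposition~\ref{pro: lowerboundKS}. The only point worth noting — and the only place where one might slip — is that Proposition~\ref{pro: directproducts} is stated for \emph{nontrivial} direct product decompositions, so in the inductive step one must check that each factor $R_i$ genuinely has a unit and a nontrivial dual, which holds by our standing assumption that all rings are unital with $1 \neq 0$.
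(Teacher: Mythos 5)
Your proof is correct, and it follows the \emph{first}\/ of the two routes that the paper flags in its opening sentence (``One can prove this either using the product formula for Kloosterman--Salem numbers or\dots'') but then does not spell out. Concretely, you normalize $C_R$ by $\sqrt{|R^*|}$, apply Proposition~\ref{pro: directproducts} to get
\[
\frac{C_R}{\sqrt{|R^*|}} = \max\!\left(\frac{C_{R_1}}{\sqrt{|R_1^*|}},\ \frac{C_{R_2}}{\sqrt{|R_2^*|}}\right),
\]
note that each normalized ratio lies in $(0,1]$ by Proposition~\ref{pro: lowerboundKS}, and conclude that the maximum equals $1$ exactly when some factor attains it, finishing by induction. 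The paper's written proof instead routes through the structural characterization of Theorem~\ref{thm: extremal}: since $H = H_1 \times \dots \times H_n$ and each $H_i$ contains $(1_i,1_i)$, the translate $H - (1,1)$ generates $R^2$ additively iff each $H_i - (1_i,1_i)$ generates $R_i^2$, which handles all $n$ factors at once without induction. Your route is more computational and needs the upper bound $C_{R_i}\leq\sqrt{|R_i^*|}$ as an extra input, but it is shorter given what has already been proved; the paper's route is slicker and, as a byproduct, exposes the combinatorial meaning of extremality (failure of the hyperbola to generate the additive group). Your closing remark that each $R_i$ has $1\neq 0$, so Proposition~\ref{pro: directproducts} applies to each split $R_1\times R'$, is exactly the point one needs to check, and it holds under the paper's standing conventions.
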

\begin{proof}
One can prove this either using the product formula for Kloosterman-Salem numbers or by noting that the hyperbola $H$ in $R \times R$ is the direct product 
of the hyperbolas $H_i$ in $R_i \times R_i$. Thus $H-\{(1,1)\}$ generates $R \times R$ as an additive group if and only if each $H_i-\{(1_i,1_i)\}$ generates $R_i \times R_i$ as an additive group. Thus $R$ is not extremal if and only if all the $R_i$'s are not extremal. 
\end{proof}

Corollary~\ref{cor: extremalproducts} lets us reduce the questions about extremal semisimple rings to ones about extremal simple rings, i.e., $\Mat_n(F)$ 
where $F$ is a finite field. We deal with fields next.

\begin{proposition}
\label{pro: finite field}
Let $\F_q$ be the finite field of order $q$. Then $\F_q$ is extremal if and only if $q=2,3,4$.
\end{proposition}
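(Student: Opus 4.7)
The plan is to use the criterion of Theorem \ref{thm: extremal}(3) --- namely that $\F_q$ is extremal if and only if the set
$T_q = \{(x-1, x^{-1}-1) : x \in \F_q^*\}$
fails to generate $\F_q^2$ as an additive group --- combined with Weil's bound \eqref{eq: weil bound} to reduce the problem to a short list of cases that can be checked by hand.

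First I would apply Weil to eliminate all but a few small $q$. The paper already records that Weil gives $C_{\F_q} \leq 2\sqrt{q/(q-1)}$ for any finite field. If $\F_q$ is extremal, then additionally $C_{\F_q} = \sqrt{|\F_q^*|} = \sqrt{q-1}$, so combining these two estimates forces $(q-1)^2 \leq 4q$, i.e., $q^2 - 6q + 1 \leq 0$, which gives $q \leq 3 + 2\sqrt{2} < 6$. Hence the only prime powers that can possibly yield an extremal finite field are $q \in \{2, 3, 4, 5\}$.

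Next I would verify each of these cases directly via Theorem \ref{thm: extremal}(3). For $q = 2$ the set $T_2 = \{(0,0)\}$ is trivial; for $q = 3$ we get $T_3 = \{(0,0), (1,1)\}$, which is contained in the diagonal line; both therefore generate proper subgroups and both fields are extremal. For $q = 4$, writing $\F_4 = \{0, 1, \alpha, \alpha^2\}$ with $\alpha^2 = \alpha + 1$ and using $\alpha^{-1} = \alpha^2$, a quick calculation gives $T_4 = \{(0,0), (\alpha^2, \alpha), (\alpha, \alpha^2)\}$; the $\F_2$-span has only $4$ elements, a proper subgroup of $\F_4^2$ (which has order $16$), so $\F_4$ is extremal. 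Finally for $q = 5$ I would compute $T_5 = \{(0,0), (1,2), (2,1), (3,3)\}$ and observe that the two vectors $(1,2)$ and $(2,1)$ have $\det\bigl(\begin{smallmatrix} 1 & 2 \\ 2 & 1 \end{smallmatrix}\bigr) = -3 \not\equiv 0 \pmod 5$, hence form an $\F_5$-basis of $\F_5^2$ and generate it additively; so $\F_5$ is \emph{not} extremal.

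The main obstacle is the borderline case $q = 5$: Weil's bound here gives only $C_{\F_5} \leq 2\sqrt{5/4} = \sqrt{5}$, which is strictly larger than $\sqrt{|\F_5^*|} = 2$, so it does not by itself rule out extremality. The explicit computation of $T_5$ and the linear-independence check over $\F_5$ are essential to close this gap. The other three cases are small finite enumerations and the reduction step to $q \leq 5$ is an immediate consequence of the already-stated Weil bound, so this borderline verification is really the only nontrivial step.
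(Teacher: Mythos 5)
Your proof is correct, and while it shares the same Weil-bound reduction to $q \leq 5$ with the paper, it diverges in how the residual cases are settled. The paper verifies $q = 2, 3, 4, 5$ by directly evaluating Kloosterman sums: it uses the fact that $\F_2$ has a single unit, observes $C_{\F_3} = \sqrt{2} = \sqrt{|\F_3^*|}$, writes out $K(m,n) = 2\cos(2\pi(m+n)/5) + 2\cos(4\pi(m-n)/5)$ for $\F_5$ to see it never attains $4$ for $(m,n)\neq(0,0)$, and computes $K(1,1) = 3$ for $\F_4$ using the trace to $\F_2$. You instead invoke the algebraic criterion of Theorem \ref{thm: extremal}(3): $\F_q$ is extremal if and only if $T_q = \{(x-1,x^{-1}-1) : x \in \F_q^*\}$ fails to additively generate $\F_q^2$. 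Your computations of $T_q$ and the generation checks are all accurate (e.g., for $q = 5$, $(1,2)$ and $(2,1)$ have determinant $-3 \not\equiv 0 \pmod 5$, so $T_5$ generates $\F_5^2$ and $\F_5$ is non-extremal; for $q = 4$, the $\F_2$-span of $T_4$ is a subgroup of order $4 < 16$). Your route has the virtue of avoiding any trigonometric or trace computations and replacing the $\F_5$ Kloosterman evaluation with a one-line linear-algebra check; the paper's route has the virtue of producing the actual spectral data $K(m,n)$, which is used elsewhere in the hyperbola-graph discussion. Both are complete and correct.
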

\begin{proof}
Let $C$ be the Koosterman-Salem number of $\F_q$.
By the Weil bound \eqref{eq: weil bound}, any nontrivial Kloosterman sum is bounded by $2\sqrt{q}$. 
Thus the field is \emph{not}\/ extremal as long as $2\sqrt{q} < q-1$ as the Kloosterman sums are sums of $q-1$ elements of modulus one.
This is the case if $q^2-6q+1 > 0$ which holds as long as $q > 5$. Thus any finite field of size $q > 5$ is not extremal.

The field $\mathbb{F}_2$ has $C=1$ and only one unit so it is extremal.
The field $\mathbb{F}_3$ has $C=\sqrt{2}=\sqrt{|\F_3^*|}$ so it is extremal. 

For $\mathbb{F}_5$ the Kloosterman sums are given by
$K(m,n) = \sum_{x \in \F_5^*} \chi(mx + \frac{n}{x})$.
The $x=1,-1$ terms and $x=2,-2$ terms are complex conjugates and with a bit of calculation, we get
$$K(m,n) = 2\cos(2\pi (m+n)/5) + 2\cos(4\pi (m-n)/5).$$

As it is impossible to have $m+n=0=m-n$ without $m=n=0$ in $\mathbb{F}_5$ we see that  
$\mathbb{F}_5$ is not extremal.

Finally write $\F_4=\F_2[u]$ where $u$ is a primitive third root of unity and hence solves $u^2+u+1=0$.
Recall the trace $\Tr\colon\F_4 \to \F_2$ is given by $\Tr(a+bu) = (a+bu) + (a+bu^2)=2a+b(u+u^2)=b$ for any $a,b \in \mathbb{F}_2$, as the Galois group of $\F_4$ over $\F_2$ is cyclic of order two generated by the Frobenius map $\frob\colon x \to x^2$.
The Kloosterman sum is then given by 
$$
K(m,n) = \sum_{x \in \F_4^*} \chi \left(\Tr \left(mx + \frac{n}{x}\right)\right)
$$
where $\chi(s) = e^{\pi i x}$ is the nontrivial additive character of $\F_2$.
Thus 
$$K(m,n) = \chi(\Tr(m+n)) + \chi(\Tr(mu + n(1+u))) + \chi(\Tr(m(1+u) + nu)).$$ 

It follows that $K(1,1)=\chi(0)+\chi(0) + \chi(0)=3=|\F_4^*|$ and so $\mathbb{F}_4$ is extremal. 
\end{proof}

\section{Hyperbola Graphs}

Let $R$ be a finite ring and let $S$ be a subset of $R^d$ for some $d \geq 1$. We say $S$ is symmetric if $x \in S$ implies $-x \in S$. 
Please consult \cite{lovasz2007eigenvalues} for the graph theoretic background needed in this section.
\begin{definition}
Given a symmetric set $S \subseteq R^d$ for some $d \geq 1$. We define the \emph{$S$-graph $G_S$}\/ to be the graph whose vertex set is $V=R^d$ and where 
$v_1$ and $v_2$ are joined by a single edge in $S$ if and only if $v_1 - v_2 \in S$. 
\end{definition}
Note this graph has no multiple edges, and has loops if and only if $0 \in S$. It is a regular graph where each vertex has degree $d = |S|$. 
Graphs of these sort have been studied extensively \cite{chung1989quasi-random, chung1992quasi-random}.

Recall the adjacency matrix $\mathbb{A}$ of this graph is a $|V| \times |V|$ matrix whose rows and columns are indexed by the vertices of the graph and 
where $a_{ij} = 1$ if vertex $v_i$ is joined to vertex $v_j$ by an edge and $a_{ij}=0$ if not. 

We first relate the spectrum of the graph $G_S$, the set of eigenvalues of $\mathbb{A}$, to the Fourier coefficients of the characteristic function of the set 
$S$. 
\begin{proposition}
Let $G_S$ be the $S$-graph of a symmetric set $S \subseteq R^d$ and let $\mathbb{A}$ be its adjacency matrix. The eigenvectors of $\mathbb{A}$ 
are exactly the characters of the additive group of the ring $R$ and the character $\chi_{m}$ corresponds to eigenvalue $|R|^d \hat{S}(m)$, 
where $\hat{S}(m)$ is the Fourier coefficient of the characteristic function $S$ with respect to that character. Thus the spectrum of $\mathbb{A}$ is the same as the set of Fourier 
coefficients of $S$ scaled by $|R^d|$. In particular the spectral gap between the largest eigenvalue and one of 2nd largest magnitude is 
$$
|S| - \max_{m \neq 0} |R^d||\hat{S}(m)|.
$$
\end{proposition}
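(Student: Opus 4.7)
The plan is to recognize the adjacency matrix $\mathbb{A}$ as a convolution operator and then exploit the fact that additive characters simultaneously diagonalize all convolution operators on a finite abelian group. Since $v_i$ is adjacent to $v_j$ if and only if $v_i-v_j\in S$, the $(i,j)$-entry of $\mathbb{A}$ equals $S(v_i-v_j)$, where we use $S$ for both the set and its indicator function. Thus for any function $f\colon R^d\to\C$, we have $(\mathbb{A}f)(v)=\sum_{w\in R^d}S(v-w)f(w)$, i.e., $\mathbb{A}f=S\ast f$ under the convolution induced by the additive group structure of $R^d$.

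Next I would check directly that each character is an eigenvector. Fix $m\in R^d$ and apply $\mathbb{A}$ to $\chi_m$: substituting $u=v-w$ gives
\[
(\mathbb{A}\chi_m)(v)=\sum_{u\in R^d}S(u)\chi_m(v-u)=\chi_m(v)\sum_{u\in R^d}S(u)\chi_m(-u),
\]
using that $\chi_m$ is a group homomorphism. By the Fourier transform convention fixed in Section~\ref{section: proof}, $\hat S(m)=|R|^{-d}\sum_{u}S(u)\chi_m(-u)$, so $(\mathbb{A}\chi_m)(v)=|R|^d\hat S(m)\chi_m(v)$. Hence $\chi_m$ is an eigenvector with eigenvalue $|R|^d\hat S(m)$. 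Because the characters of the finite abelian group $R^d$ form an orthogonal basis of $\C^{R^d}$ of size $|R|^d=\dim\C^{R^d}$, this accounts for every eigenvector and every eigenvalue of $\mathbb{A}$, so the spectrum equals $\{\,|R|^d\hat S(m):m\in R^d\,\}$ with multiplicities.

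For the spectral gap statement I would identify the top eigenvalue: the trivial character $\chi_0$ gives eigenvalue $|R|^d\hat S(0)=\sum_u S(u)=|S|$, and since $\mathbb{A}$ is a real symmetric $(0,1)$-matrix for a regular graph of degree $|S|$, no eigenvalue exceeds $|S|$ in magnitude. Therefore the gap between the top eigenvalue and the next largest one in magnitude is exactly $|S|-\max_{m\neq 0}|R|^d|\hat S(m)|$, as claimed. (Here one should note that $\mathbb{A}$ being real symmetric forces the collection $\{|R|^d\hat S(m)\}$ to be real, which follows from symmetry of $S$: $\hat S(m)=\overline{\hat S(m)}$ because $S(-u)=S(u)$.)

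The argument is essentially a one-line Fourier computation, so the only ``obstacle'' is bookkeeping: being consistent with the paper's Fourier convention (the minus sign and the $|R|^{-d}$ normalization) and remembering that symmetry of $S$ makes the eigenvalues real so that ``largest eigenvalue'' and ``largest magnitude eigenvalue'' behave as expected.
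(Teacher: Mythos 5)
Your proof is correct and follows essentially the same route as the paper: realize $\mathbb{A}$ as a convolution/translation operator on functions on $R^d$, plug in characters to exhibit them as eigenvectors with eigenvalue $|R|^d\hat S(m)$, and count to see they exhaust the spectrum. If anything you are slightly more careful than the paper about the sign bookkeeping (the paper silently uses symmetry of $S$ twice) and about remarking that symmetry forces the eigenvalues to be real, but the substance is identical.
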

\begin{proof}
First note that we may think of a function $f: V=R^d \to \mathbb{C}$ as a column vector whose entries are indexed by the vertex set $V=R^d$ and whose $v$-th entry is $f(v)$. Under this identification, it is easy to check that the adjacency matrix $\mathbb{A}$ corresponds to an operator $g = A f$ where 
$$
g(v) = \sum_{u \in S} f(v+u).
$$
Now let $f=\chi_m$ be an additive character of $R^d$, then 
\begin{eqnarray*}
Af (v) &=& \sum_{u \in S} \chi_m(v+u) \\ &=& \sum_{u \in R^d} \chi_m(v)\chi_m(u)S(u) \\ &=& \chi_m(v) \sum_{u \in R^d} \chi_m(-u)S(u) \\ &=& |R^d| \hat{S}(m)\chi_m(v)
\end{eqnarray*}
for all $v \in V$. Thus $Af=|R|^d \hat{S}(m) f$ and $f=\chi_m$ is an eigenvector of $A$ with eigenvalue $|R|^d\hat{S}(m)$.
As $(R^d,+)$ is a finite abelian group, the number of such characters is $|R^d|=|V|$. As irreducible characters of finite groups are linearly independent, 
we see that we have indeed found all the eigenvectors of $\mathbb{A}$. The proposition follows.
\end{proof}

In a regular graph of degree $d$, $d$ is the largest eigenvalue of $\mathbb{A}$. It is also an eigenvalue of maximal magnitude though $-d$ is also in the spectrum and of equal magnitude if the graph is bipartite. Furthermore by a theorem of Frobenius, the multiplicity of $d$ as an eigenvalue of 
$\mathbb{A}$ is the same as the number of connected components of the graph. Thus we have the following corollary:

\begin{corollary}
\label{cor: nonextremal hyperbola graphs}
Let $G_S$ be the $S$-graph arising from a symmetric set $S \subseteq R^d$. Then $G_S$ is connected if and only if 
$$
\max_{m \neq 0} \hat{S}(m) < |R|^{-d}|S|.
$$
Furthermore we have 
$$
\max_{m \neq 0} |\hat{S}(m)| < |R|^{-d}|S|
$$
if and only if the graph is connected and not bipartite.
\end{corollary}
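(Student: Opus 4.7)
The plan is to read off the corollary from the preceding proposition combined with two standard facts from spectral graph theory: for a $d$-regular graph, the multiplicity of $d$ as an eigenvalue of the adjacency matrix equals the number of connected components (Frobenius/Perron-Frobenius), and $-d$ appears in the spectrum if and only if the graph is bipartite.

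First I would observe that since $S$ is symmetric (i.e., $S=-S$), the Fourier coefficients $\hat{S}(m)$ are real. This follows immediately from the calculation
\[
\overline{\hat{S}(m)}=|R|^{-d}\sum_{u\in R^d}\overline{\chi_m(-u)}S(u)=|R|^{-d}\sum_{u\in R^d}\chi_m(u)S(u)=|R|^{-d}\sum_{u\in R^d}\chi_m(-u)S(-u)=\hat{S}(m),
\]
so the eigenvalues $|R|^d\hat{S}(m)$ are all real, as they must be for a symmetric adjacency matrix. Next, the trivial character $\chi_0$ gives eigenvalue $|R|^d\hat{S}(0)=|S|$, which is the maximal eigenvalue of a $|S|$-regular graph, and the trivial triangle-inequality bound $|\hat{S}(m)|\leq |R|^{-d}|S|$ shows that $\pm|S|$ are indeed the extremes of the spectrum.

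For the first equivalence, I would invoke the cited theorem of Frobenius: $G_S$ is connected precisely when $|S|$ has multiplicity one as an eigenvalue of $\mathbb{A}$. Via the preceding proposition, this multiplicity is the number of characters $\chi_m$ for which $|R|^d\hat{S}(m)=|S|$, i.e., $\hat{S}(m)=|R|^{-d}|S|$. Since $\chi_0$ always contributes, multiplicity one is equivalent to $\hat{S}(m)<|R|^{-d}|S|$ for every $m\neq 0$, which is the stated condition.

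For the second equivalence, I would use the standard characterization that a connected regular graph of degree $|S|$ is bipartite if and only if $-|S|$ belongs to its spectrum. Combining this with the first equivalence, the graph is connected and not bipartite iff neither $|S|$ nor $-|S|$ equals $|R|^d\hat{S}(m)$ for any $m\neq 0$, which — using reality of $\hat{S}(m)$ — is precisely $|\hat{S}(m)|<|R|^{-d}|S|$ for all $m\neq 0$. The only mildly delicate point is ensuring the bipartite criterion is applied on each component in the disconnected case; this is handled by noting that if $-|S|$ appears in the spectrum, then some component is bipartite, while if the graph is disconnected then $|S|$ already has multiplicity $\geq 2$, so in either failure mode the strict inequality on $|\hat{S}|$ is violated. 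This is really the only step that requires care, and it is routine once the spectral dictionary from the previous proposition is in hand.
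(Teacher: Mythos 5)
Your argument follows the same route as the paper: apply the Frobenius result (multiplicity of the top eigenvalue $|S|$ equals the number of connected components) to the spectral dictionary from the preceding proposition, and then use the standard fact that a connected regular graph is bipartite iff $-|S|$ is in the spectrum. Your added observation that symmetry of $S$ forces $\hat{S}(m)$ to be real is a worthwhile detail the paper leaves implicit (and is in fact needed for the unsigned inequality $\hat{S}(m)<|R|^{-d}|S|$ in the first part to even be meaningful), but it does not change the approach.
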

\begin{proof}
The first part follows from the Theorem of Frobenius mentioned in the preceding paragraph. The second part then follows as the only element of the spectrum 
that can have the same magnitude as $d$ besides $d$ itself is $-d$ and $-d$ is in the spectrum of a connected regular graph if and only if the graph is bipartite.
\end{proof}

\begin{corollary}
If $R$ is a finite ring which is not extremal then for any $A, B \in R$ there exists $n \geq 1$ and units $u_1, \dots, u_n$ such that
\begin{align*}
A &= u_1 + \dots + u_n \\
B &= u_1^{-1} + \dots + u_n^{-1}.
\end{align*}
\end{corollary}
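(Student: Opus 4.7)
The plan is to deduce this from statement~(4) of Theorem~\ref{thm: extremal}, which applies because $R$ is non-extremal. That statement gives us, for every target $(A', B') \in R^2$, some $m \geq 1$ and units $y_1, \dots, y_m$ with $\sum y_i - m = A'$ and $\sum y_i^{-1} - m = B'$. The corollary, by contrast, asks for a representation $A = \sum u_i$ and $B = \sum u_i^{-1}$ with no correction term. Morally we would apply~(4) to the target $(A - n, B - n)$ and demand that the count returned by~(4) equal $n$, but of course the count in~(4) is produced by~(4), not chosen by us.

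To get around this chicken-and-egg issue I would first observe that the count $m$ in~(4) is completely flexible from above: appending $y_{m+1} = 1$ to any valid solution gives a valid solution with $m+1$ units, because both $\sum y_i$ and $\sum y_i^{-1}$ increase by $1$ while the subtracted $m$ is replaced by $m+1$, leaving $A'$ and $B'$ unchanged. Iterating this padding, for each $(A', B') \in R^2$ there is a threshold $m_0(A', B')$ such that solutions exist with \emph{every} count $m \geq m_0(A', B')$. Since $R^2$ is finite, setting $M := \max_{(A', B') \in R^2} m_0(A', B')$ produces a threshold that is uniform in the target.

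Now given $A, B \in R$, I would pick any $n \geq M$ and apply this strengthened form of~(4) to the target $(A - n \cdot 1_R,\, B - n \cdot 1_R)$ with the prescribed count $m = n$. This yields units $u_1, \ldots, u_n$ with $\sum u_i - n = A - n$ and $\sum u_i^{-1} - n = B - n$, which rearrange to $A = \sum u_i$ and $B = \sum u_i^{-1}$, as desired. There is no substantive obstacle; the only point demanding care is precisely the chicken-and-egg issue above, which is defused by the uniform threshold $M$ made available by the finiteness of $R^2$.
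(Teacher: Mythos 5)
Your proof is correct, but it takes a different route from the paper's. The paper deduces the corollary from the graph-theoretic side: a non-extremal ring has a connected hyperbola graph (proved earlier via the spectral characterization), so a path in that graph from $(0,0)$ to $(A,B)$ directly yields $(A,B)=\sum_{i}(u_i,u_i^{-1})$. You instead work from statement~(4) of Theorem~\ref{thm: extremal}, which talks about the shifted hyperbola $H-(1,1)$, and neutralize the troublesome $-n$ correction with a padding-by-$y=1$ trick followed by a ``uniform threshold'' $M$ obtained from the finiteness of $R^2$. Both are valid; the paper's route is geometric and very short once the graph machinery is in place, while yours is purely algebraic and self-contained modulo Theorem~\ref{thm: extremal}. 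One small simplification of your approach is worth noting: since $(1,1)\in H$, the additive group $\langle H\rangle$ is closed under subtracting $(1,1)$, so $\langle H\rangle\supseteq H-(1,1)$ and hence, by item~(3) of that theorem, $\langle H\rangle=R^2$; in a finite abelian group the sub-semigroup generated by a nonempty set is already a subgroup, so the nonempty sums of hyperbola points exhaust $R^2$ with no need for the padding-and-maximum step (the case $(A,B)=(0,0)$ being handled by $(1,1)+(-1,-1)$).
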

\begin{proof}
If $R$ is not extremal, by Corollary~\ref{cor: nonextremal hyperbola graphs} we have that the hyperbola graph is a connected graph. 
Thus in particular it is possible to get from vertex $(0,0)$ to vertex $(A,B)$ with a simple path. This means that 
$(A,B) = (0,0) + (u_1, u_1^{-1}) + \dots + (u_n, u_n^{-1})$ for some $(u_j, u_j^{-1})$ on the hyperbola. This gives the result.
\end{proof}

\begin{definition}
Let $R$ be a finite ring and let $H \subseteq R^2$ be the hyperbola $H=\{ (u, u^{-1}) | u \in R^* \}$. The hyperbola graph is the graph arising from the symmetric set $H$.
By the earlier results of this section, this graph is regular of degree $d=|R^*|$ and has a spectrum given by $|R|^2$ times the Fourier coefficients of $H$. 
\end{definition}

\begin{corollary}
Let $R$ be a finite ring and $C$ be its Kloosterman-Salem number. Then if $R$ is not extremal, the hyperbola graph $G_H$ is connected and not 
bipartite. Furthermore the spectral gap is given by $|R^*| - \sqrt{|R^*|}C$.  Conversely when $R$ is extremal, the hyperbola graph $G_H$ is either 
disconnected or connected and bipartite.
\end{corollary}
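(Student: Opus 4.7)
The plan is to treat this corollary as a direct unpacking of the spectrum-to-Fourier proposition stated earlier in this section, together with Corollary \ref{cor: nonextremal hyperbola graphs}, rephrased in terms of the Kloosterman-Salem number $C$. First I would verify that the hyperbola $H=\{(x,x^{-1}):x\in R^*\}$ is symmetric in the sense of this section: if $x\in R^*$ then $(-x)(-x^{-1})=xx^{-1}=1$, so $(-x,-x^{-1})\in H$. Thus $G_H$ is a legitimate $S$-graph, and symmetry of $H$ forces every Fourier coefficient $\hat H(m)$ to be real (so that all eigenvalues of the adjacency matrix are real, as required).

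Next I would record the two numerical translations that do all the work. By the spectrum-Fourier proposition, the eigenvalues of the adjacency matrix of $G_H$ are exactly $\{|R|^2\hat H(m):m\in R^2\}$; the trivial character contributes the principal eigenvalue $|R|^2\hat H(0)=|R^*|$, and the very definition of the Kloosterman-Salem number yields $\max_{m\neq 0}|R|^2|\hat H(m)|=C\sqrt{|R^*|}$. Thus the non-principal eigenvalue of largest magnitude has magnitude exactly $C\sqrt{|R^*|}$, so the gap between the principal eigenvalue and the next-largest-in-magnitude eigenvalue is $|R^*|-C\sqrt{|R^*|}$.

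For the forward implication, the non-extremal hypothesis $C<\sqrt{|R^*|}$ is equivalent to $\max_{m\neq 0}|\hat H(m)|<|R|^{-2}|R^*|$, which is precisely the hypothesis of Corollary \ref{cor: nonextremal hyperbola graphs} with $d=2$ and $|S|=|R^*|$; that corollary then delivers both the connectedness and the non-bipartiteness of $G_H$, and the spectral gap formula follows from the computation above. Conversely, if $R$ is extremal then $C=\sqrt{|R^*|}$, so some non-principal eigenvalue has magnitude $|R^*|$; because $\hat H(m)$ is real, that eigenvalue must be $+|R^*|$ or $-|R^*|$. If $+|R^*|$ occurs as a non-principal eigenvalue, then by the Frobenius theorem invoked just before Corollary \ref{cor: nonextremal hyperbola graphs} the eigenvalue $|R^*|$ has multiplicity at least two and $G_H$ is disconnected; otherwise the only non-principal eigenvalues of maximal magnitude are $-|R^*|$, so $G_H$ is connected but bipartite. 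There is no substantive obstacle in this argument, since each step is an immediate application of a result already in hand; the only delicate bookkeeping is the reality of $\hat H(m)$ in the extremal case, which is what converts the statement "some non-principal eigenvalue has magnitude $|R^*|$" into the clean dichotomy between disconnected and bipartite.
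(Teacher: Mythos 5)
Your proof is correct and follows essentially the same route as the paper: apply the spectrum--Fourier proposition to identify the eigenvalues as $|R|^2\hat H(m)$, translate the extremal/non-extremal dichotomy into the condition $\max_{m\neq 0}|R|^2|\hat H(m)|=C\sqrt{|R^*|}$ being $<$ or $=$ the degree $|R^*|$, and then invoke the standard facts about regular graphs (multiplicity of the top eigenvalue detects connectedness, presence of $-d$ detects bipartiteness). The only cosmetic difference is that you derive the reality of $\hat H(m)$ from symmetry of $H$ rather than simply observing that adjacency matrices of undirected graphs are symmetric and hence have real spectrum, which is how the paper phrases it.
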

\begin{proof}
The spectrum of a regular graph of degree $d$ is real and contained in the interval $[-d,d]$. It is connected if and only if $d$ has multiplicity $1$ as an 
eigenvalue and bipartite if and only if $-d$ is an eigenvalue.

When $R$ is not extremal, $|R^2| |\hat{H}(m)| < |R^*|$ for $m \neq 0$ and so $d=|R^*|$ has multiplicity one as an eigenvalue and $-d$ does not occur 
as an eigenvalue. Furthermore by definition 
$$|R|^2 \max_{m \neq 0} |\hat{H}(m)| = C\sqrt{|R^*|},$$ so the spectral gap of the hyperbola graph is given by 
$$|R^*| - C\sqrt{|R^*|}$$ and the corollary follows.
\end{proof}

\begin{example}[Hyperbola graphs of extremal examples]
Let $K_n$ denote the complete graph on $n$ vertices.
The hyperbola graphs of the extremal rings $\F_2,\F_3,$ and $\F_4$ are disjoint unions of complete graphs:
\begin{itemize}
\item The hyperbola graph of $\F_2$ is the disjoint union of two edges, that is, two $K_2$ graphs. 
\item The hyperbola graph of $\F_3$  is the disjoint union of $3$ triangles, that is, three $K_3$ graphs. 
\item The hyperbola graph of $\F_4$ is the disjoint union of four $K_4$'s.
\end{itemize}
For $q>4$, $\mathbb{F}_q$ is not an extremal ring and so the associated hyperbola graphs are connected, non-bipartite graphs; thus the pattern exhibited 
by $\mathbb{F}_2, \mathbb{F}_3$ and $\mathbb{F}_4$ does not continue.

Explicitly for $\mathbb{F}_5$, the hyperbola is given by $H=\{ (1,1), (2,3), (3,2), (4,4) \}$. Thus given $(x,y) \in \mathbb{F}_5^2$ it is clear there is a path 
from $(x,y)$ to all the $(x+n, y+n), n=0,1,2,3,4$ consisting of adding $(1,1) \in H$ repeatedly to $(x,y)$. On the other hand, adding $(2,3)$ or $(3,2)$ to 
$(x,y)$ raises or lowers the value of $y-x$ by one. From these facts it is easy to directly check that the hyperbola graph of $\mathbb{F}_5$ is connected.
In fact the $5$ vertices on the line $y-x=b$ for fixed $b$ form a cycle subgraph $C_5$. The hyperbola graph of $\mathbb{F}_5$ is obtained from the 
five cycle subgraphs for $b=0,1,2,3,4$ by joining each point in the cycle subgraph corresponding to the line $y-x=b$ to exactly one point in the cycle subgraph 
corresponding to the line $y-x=b+1$ and to exactly one point in the cycle subgraph for the line $y-x=b-1$.
\end{example}

\begin{example}
If $R_1, R_2$ are finite rings and $R=R_1 \times R_2$ is their direct product, the Chinese remainder theorem shows that $H=H_1 \times H_2$ 
where $H$ is the hyperbola of $R$ and $H_j$ is the hyperbola of $R_j$. The resulting hyperbola graph $G_H$ has $(x_1,y_1)$ adjacent to 
$(x_2,y_2)$ if and only if $x_1, x_2$ are adjacent in $G_{H_1}$ and $y_1, y_2$ are adjacent in $G_{H_2}$. Thus the adjacency matrix of 
$G_H$ is the tensor product of those for $G_{H_1}$ and $G_{H_2}$. Thus if $\lambda_1, \dots, \lambda_N$ is the spectrum of $G_{H_1}$ (listed with multiplicity) and 
$\mu_1, \dots, \mu_K$ is the spectrum of $G_{H_2}$ then $\lambda_i \mu_j, 1 \leq i \leq N, 1 \leq j \leq K$ is the spectrum of $G_H$.
\end{example}

Given a  graph, a random walk on the graph is a process where we start at some vertex and at each step move to an adjacent vertex 
in a manner where it is equally likely that we move to any adjacent vertex versus any other.

It is well known (see \cite{lovasz2007eigenvalues}) that the random walk on a connected, non bipartite, regular graph converges to the uniform distribution. This means that no matter where we start, after a large number of random steps, we are equally likely to be anywhere in the graph. More precisely, in the hyperbola graph for a non-extremal ring $R$, using the results in \cite{lovasz2007eigenvalues}, we have if 
$p_{ij}^t$ is the probability that starting at vertex $i$ we end up at vertex $j$ after $t$ steps in a random walk, then $p_{ij}^t$ satisfies
$$ \left| p_{ij}^t - \frac{1}{|R|^2} \right| \leq \left(\frac{C}{\sqrt{|R^*|}}\right)^t. $$ where $C$ is the Kloosterman-Salem number of the finite ring $R$.

A $d$-regular, connected, non-bipartite graph has good expansion properties if its spectral gap is large. In particular, 
if $d - \lambda_2 \geq 2\epsilon d$ then $G$ is an $\epsilon$-expander (see \cite{lovasz2007eigenvalues}). It follows that:

\begin{corollary}
If $R$ is a non-extremal ring with Kloosterman Salem number $C$, then the corresponding hyperbola graph is a $|R^*|$-regular, connected, non-bipartite 
simple graph and is an expander graph with expander ratio is 
$$\epsilon=\frac{1}{2}\left(1-\frac{C}{\sqrt{|R^*|}}\right).$$
\end{corollary}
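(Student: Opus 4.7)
The plan is to verify each claimed property of the hyperbola graph in turn, using the results already assembled in the preceding corollaries and the standard expander criterion cited from \cite{lovasz2007eigenvalues}. The proof is essentially an assembly of pieces already in place.

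First I would verify that the hyperbola graph is a simple $|R^*|$-regular graph. It has no multiple edges by definition of an $S$-graph, and no loops because $(0,0) \notin H$ (since $0\cdot 0 \neq 1$ in a ring with $1 \neq 0$). Every vertex has degree $|H| = |R^*|$, so the graph is $|R^*|$-regular. Next, since $R$ is non-extremal by hypothesis, the immediately preceding corollary says that $G_H$ is connected and non-bipartite, with spectral gap exactly $|R^*| - C\sqrt{|R^*|}$.

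The final step is to invoke the expander criterion: a $d$-regular connected graph with second-largest eigenvalue $\lambda_2$ satisfying $d - \lambda_2 \geq 2\epsilon d$ is an $\epsilon$-expander. Setting $d = |R^*|$ and using that the spectral gap here equals $|R^*|-C\sqrt{|R^*|}$, the inequality $d-\lambda_2\geq 2\epsilon d$ becomes
\[
|R^*| - C\sqrt{|R^*|} \geq 2\epsilon |R^*|,
\]
which, upon dividing by $|R^*|$, rearranges to
\[
\epsilon \leq \tfrac{1}{2}\left(1 - \tfrac{C}{\sqrt{|R^*|}}\right).
\]
Taking $\epsilon$ to be the right-hand side gives the claimed expander ratio, and the quantity is positive precisely because $R$ is non-extremal, i.e.\ $C < \sqrt{|R^*|}$ by Theorem~\ref{thm: extremal}.

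There is no real obstacle here; the content was already done in the spectral/Fourier identification of eigenvalues with $|R|^2\hat H(m)$ and in Corollary~\ref{cor: nonextremal hyperbola graphs}. The only thing to be slightly careful about is making sure the expander criterion is applied with the \emph{second-largest} eigenvalue rather than the second-largest in absolute value; since the graph is non-bipartite, $-|R^*|$ is not an eigenvalue, and the extremal non-trivial eigenvalue in magnitude is bounded by $C\sqrt{|R^*|}$, so both the upper and lower spectral bounds are controlled and the criterion applies cleanly.
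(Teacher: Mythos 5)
Your proof is correct and takes essentially the same route the paper intends: the paper states the expander criterion from \cite{lovasz2007eigenvalues} and lets the corollary follow immediately, and your write-up simply fills in that assembly, including the useful observation that bounding the second-largest eigenvalue by the second-largest magnitude $C\sqrt{|R^*|}$ only strengthens the required inequality $d-\lambda_2\geq 2\epsilon d$, so the criterion applies without issue.
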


\vskip.125in 

\begin{remark} Among expander graphs, the Ramanujan graphs are those with best spectral expansion. The hyperbola graph of a non-extremal ring $R$ is a Ramanujan graph if $\lambda_2 \leq 2\sqrt{|R^*|-1}$. This happens if and only if the Kloosterman-Salem number $C$ satisfies $C \leq 2\sqrt{1-\frac{1}{|R^*|}}$. If $R$ is an odd order ring, our results show that aside from a finite set of exceptions, this can only occur when $R$ is a field.

Further, a graph is Ramanujan if and only if its \emph{Ihara zeta function}\/ satisfied the ``Riemann Hypothesis'' \cite{murty2003ramanujan}.
The Ihara zeta function is defined for all graphs, and so it provides a zeta function associated to Kloosterman sums over general rings.
For Kloosterman sums over fields, the Ihara zeta function and the classical zeta function (\cite{iwaniec2004analytic} section 11.5) are closely related.
\end{remark}

The results of section \ref{geomcriterion} yield an upper bound on the independence number of hyperbola graphs.
\begin{proposition}
  \label{prop: independence number}
The independence number of the hyperbola graph of a finite ring $R$ with Kloosterman-Salem number $C_R$ is at most $\frac{C_R|R|^2}{\sqrt{|R^*|}}$.
\end{proposition}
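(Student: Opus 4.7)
The plan is to read off the bound directly from Theorem~\ref{thm: analyticalbound} by recognizing that independent sets in the hyperbola graph are precisely the sets $E \subseteq R^2$ avoiding the relation ``$x - y \in H$''.

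First I would translate the combinatorial problem into the language of Section~\ref{geomcriterion}. Let $E \subseteq R^2$ be any independent set in the hyperbola graph of $R$. By definition, for any two distinct $e_1, e_2 \in E$, the difference $e_1 - e_2$ is not in $H$. Moreover, when $e_1 = e_2$, we get $e_1 - e_2 = 0$, which is not in $H$ either, since $(0,0)$ does not satisfy $x_1 x_2 = 1$ (as $1 \neq 0$ in $R$). Consequently every ordered pair $(e_1, e_2) \in E \times E$ fails to have $e_1 - e_2 \in H$, which means $n(E) = 0$ in the notation of Section~\ref{geomcriterion}.

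Next, I would take the contrapositive of Theorem~\ref{thm: analyticalbound}: if $n(E) = 0$, then necessarily $|E| \leq \tfrac{C_R |R|^2}{\sqrt{|R^*|}}$. Applying this to our independent set $E$ gives the required upper bound on the independence number.

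There is really no obstacle here; the work has already been done in Theorem~\ref{thm: analyticalbound}, whose Fourier-analytic proof uses Plancherel and the pure $C_R$-Salem bound on $\hat{H}$. The only point worth a sentence of care is the edge case $e_1 = e_2$, which is handled by noting that $0 \notin H$. Thus the proof reduces to a one-line application of the earlier theorem.
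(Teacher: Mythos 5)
Your proof is correct and matches the paper's argument: recognize that an independent set $E$ has $n(E)=0$ and apply the contrapositive of Theorem~\ref{thm: analyticalbound}. Your remark that $0\notin H$ (so the diagonal contributes nothing to $n(E)$) is a small but sound point of care the paper leaves implicit.
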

\begin{proof}
  Let $E\subset R^2$ be an independent set.
This means that there are no solutions to $x-y\in H$ with $x$ and $y$ in $E$.
In the language of section \ref{geomcriterion}, this means that $n(E)=0$, hence by Theorem \ref{thm: analyticalbound}
\[
|E|\leq\frac{C_R|R|^2}{\sqrt{|R^*|}}.
\]
\end{proof}
This bound implies a lower bound on the chromatic number of hyperbola graphs.
\begin{proposition}
  \label{prop: chromatic number}
The chromatic number of the hyperbola graph of a finite ring $R$ with Kloosterman-Salem number $C_R$ is at least $\frac{\sqrt{|R^*|}}{C_R}$.
\end{proposition}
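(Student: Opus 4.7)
The plan is to deduce the chromatic number bound from the independence number bound of Proposition \ref{prop: independence number} via the standard inequality $\chi(G)\cdot\alpha(G)\geq |V(G)|$, which holds for any finite graph $G$.

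The reasoning is straightforward. In any proper vertex coloring of a graph $G$, each color class is an independent set, so its size is at most $\alpha(G)$. If $G$ is properly colored with $\chi(G)$ colors, then
\[
|V(G)| \;=\; \sum_{c=1}^{\chi(G)} |\text{color class } c| \;\leq\; \chi(G)\cdot\alpha(G),
\]
hence $\chi(G)\geq |V(G)|/\alpha(G)$.

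I would apply this to the hyperbola graph $G_H$, whose vertex set is $R^2$, so $|V(G_H)|=|R|^2$. By Proposition \ref{prop: independence number}, we have $\alpha(G_H)\leq C_R|R|^2/\sqrt{|R^*|}$. Substituting into the general bound gives
\[
\chi(G_H) \;\geq\; \frac{|R|^2}{\alpha(G_H)} \;\geq\; \frac{|R|^2}{C_R|R|^2/\sqrt{|R^*|}} \;=\; \frac{\sqrt{|R^*|}}{C_R},
\]
which is the desired estimate. There is no real obstacle here: once Proposition \ref{prop: independence number} is in hand, the chromatic number bound is an immediate consequence of the elementary fact relating the chromatic number, independence number, and order of a graph. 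The only thing to note is that Proposition \ref{prop: independence number} itself rested on the analytic/Fourier input from Theorem \ref{thm: analyticalbound}, and this is where all the real work already took place.
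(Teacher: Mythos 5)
Your proof is correct and is essentially identical to the paper's: both derive the bound by partitioning the vertex set into color classes, noting each is an independent set, and applying Proposition \ref{prop: independence number}. The only cosmetic difference is that you phrase the counting step as the standard inequality $\chi(G)\cdot\alpha(G)\geq |V(G)|$, while the paper derives that inequality inline.
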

\begin{proof}
  Suppose that the hyperbola graph can be colored by $k$ colors so that no two adjacent vertices are the same color.
This partitions the vertex set $R^2$ into $k$ sets $E_1,\ldots,E_k$, where each $E_i$ is monochromatic.
Since vertices of the same color are not connected, each $E_i$ is an independent set, and so by Proposition \ref{prop: independence number}, we have
\[
|R^2|=\sum_{i=1}^k|E_i|\leq k\frac{C_R|R|^2}{\sqrt{|R^*|}}.
\]
Rearranging yields the desired lower bound on the number $k$ of colors required.
\end{proof}

\bibliography{salem_library}
\bibliographystyle{plain}

\enddocument